\numberwithin{equation}{section}
\newtheorem{theorem}{Theorem}[section]
\newtheorem{lemma}[theorem]{Lemma}
\newtheorem{proposition}[theorem]{Proposition}
\newtheorem{rem}[theorem]{Remark}
\def \P{ \mathbb P  }
\def \E{ \mathbb E  }
\def \tP{ \widetilde{\mathbb P}  }
\def \tE{\widetilde{ \mathbb E} }
\renewcommand{\triangle}{\Delta}
\newcommand{\R}{\mathbb{R}}
\newcommand{\N}{\mathbb{N}}
\newcommand{\U}{\mathbb{U}}
\newcommand{\ind}{\mathds{1}}
\renewcommand{\ge}{\geq}
\renewcommand{\le}{\leq}
\newcommand{\cA}{\mathcal{A}}
\newcommand{\cB}{\mathcal{B}}
\newcommand{\cF}{\mathcal{F}}
\newcommand{\dd}{\mathrm{d}}   
\DeclareMathOperator{\argmin}{\mathrm{argmin}}
\newcommand{\gep}{\varepsilon}       
\newcommand{\gD}{\Delta}
\newcommand{\gl}{\lambda}
\newcommand{\gL}{\Lambda}
\newcommand{\cL}{\mathcal L}
\newcommand{\tf}{\textsc{f}}
\renewcommand{\log}{\ln}
\renewcommand{\epsilon}{\varepsilon}
\newcommand{\hub}{\color{red}}
\newcommand{\cX}{\mathcal X}
\title{Semiclassical limit of Liouville Field Theory}
\date{}
\begin{document}

\maketitle
\begin{center}

{\Large  Hubert Lacoin \footnotemark[1],\ R\'emi Rhodes \footnotemark[1]\footnotemark[3],\ 
Vincent Vargas \footnotemark[2]\footnotemark[3]}

\bigskip

\footnotetext[1]{Universit{\'e} Paris-Dauphine, Ceremade, F-75016 Paris, France. Corresponding author: rhodes@ceremade.dauphine.fr, tel: 0033144054851.} \footnotetext[2]{Ecole Normale Sup\'erieure, DMA, 45 rue d'Ulm,  75005 Paris, France.} \footnotetext[3]{Partially supported by grant ANR-11-JCJC  CHAMU}

\end{center}

\begin{abstract}
Liouville Field Theory (LFT for short) is a two dimensional model of random surfaces, which is for instance involved in $2d$ string theory or in the description of the fluctuations of metrics in $2d$ Liouville quantum gravity. This is a probabilistic model that consists in weighting the classical   Free Field action with an interaction term given by the exponential of a Gaussian multiplicative chaos. The main input of our work is the study of the semiclassical limit of the theory, which is a prescribed asymptotic regime 
of LFT of interest in physics literature (see \cite{witten} and references therein). We derive exact formulas for the Laplace transform of the Liouville field in the case of flat metric on the unit disk with Dirichlet boundary conditions. As a consequence, we prove that the Liouville field  concentrates on the solution of the classical Liouville equation   with explicit negative  scalar curvature. We also characterize the leading fluctuations, which are Gaussian and massive, and establish a large deviation principle. Though considered as an ansatz in the whole physics literature, it seems that it is the first rigorous probabilistic derivation of the semiclassical limit of LFT.  On the other hand, we carry out the same analysis  when we further weight the Liouville action with heavy matter operators. This procedure appears when computing the $n$-points correlation functions of LFT. 
 \end{abstract}
 
\vspace{0.2cm}


\noindent{\bf Key words or phrases:} Liouville equation, Gaussian multiplicative chaos, semiclassical limit, large deviation principle,  Liouville field theory, singular Liouville equation.

\vspace{0.2cm}



 \makeatletter \renewcommand{\@dotsep}{10000} \makeatother

\normalsize

\section{Introduction}
One of the aims of this paper is to initiate the study of Laplace asymptotics and large deviation principles in the realm of Liouville field theory. 

To begin with, let us mention that there exists a considerable literature devoted to Laplace asymptotic expansions and large deviation principles 
for the canonical random paths: the Brownian motion in $\R^d$. To make things simple, the aim of these studies is to investigate the asymptotic behaviour as $\gamma\to 0$ of  
\begin{equation}\label{beginning}
\E[G(\gamma B)e^{-\gamma^{-2}F(\gamma B)}] 
\end{equation}
where $B$ is a Brownian motion and $F,G$ are general functionals. Schilder's  pioneering work   
\cite{Schilder} (see also \cite{pincus}) treated the full asymptotic expansion in the case of Wiener integrals. This was then extended by Freidlin and Wentzell \cite{freidlin} to It\^o diffusions.
Similar results were obtained for conditioned Brownian paths (such as the Brownian bridge)  by Davies and Truman \cite{davies1,davies2,davies3,davies4}. 
Ellis and Rosen \cite{ellis1,ellis2,ellis3} also developed further  Laplace asymptotic expansions for Gaussian functional integrals.
Then Azencott and Doss \cite{doss}  used asymptotic expansions to study the semiclassical limit of the Schr\"odinger equation 
(see also Azencott \cite{azencott1,azencott2}). 
These works initiated   a long series   (see  for instance \cite{UBA1,UBA2,UBA3,UBA4}) and it is beyond the scope of this paper to review the whole literature until nowadays.

 There is an important conceptual difference between canonical random paths and canonical random surfaces. Whereas Brownian motion and its variants are rather nicely behaved
 (H\"older continuous), the canonical two dimensional random surface, i.e. the Gaussian Free Field (GFF), is much wilder: 
 it cannot be  defined pointwise for instance and must be understood as a random distribution (like e.g.\ the local time of the one dimensional Brownian motion, or its derivative). 
 As a consequence, many nonlinear functionals defined solely on the space of continuous functions must be defined via renormalization techniques  when applied to the GFF: see the book of Simon \cite{Simon} for instance. In this paper, we consider probably the most natural  framework of weighted random surfaces:  the $2d$-Liouville Field Theory (LFT). LFT is ruled by the   Liouville action 
\begin{equation}\label{actiondefolieetbut}
S_L(\varphi)=\frac{1}{4\pi}\int_D \big[|\partial^{\hat{g}}\varphi |^2_{\hat{g}}+QR_{\hat{g}}\varphi+4\pi\mu e^{\gamma\varphi}\big]\,\lambda_{\hat{g}}(\dd x)
\end{equation}
 in the background metric $\hat{g}$ ($\partial^{\hat{g}}$, $R_{\hat{g}}$ and $\lambda_{\hat{g}}$ stand for the gradient, curvature and volume form of the metric  $\hat{g}$) with $Q=\frac{\gamma}{2}+\frac{2}{\gamma}$, $\gamma\in ]0,2]$ and $\mu>0$. This is a model describing random surfaces or metrics. Informally, the probability  to observe  a surface in $D\varphi$  is proportional to \begin{equation}\label{path}
 e^{-S_L(\varphi)}D\varphi
 \end{equation} where $D\varphi$ stands for the ``uniform measure'' on surfaces. 
 Recall that, in the physics literature,  the Liouville action enables to describe random metrics in Liouville quantum gravity in the conformal gauge as introduced by Polyakov \cite{Pol}   (studied by David \cite{cf:Da} and Distler-Kawai \cite{DistKa}, see also the seminal work of Knizhnik-Polyakov-Zamolodchikov \cite{cf:KPZ} in the light cone gauge) or in $2d$ string theory (see Klebanov's review \cite{Kle} for instance). There are many excellent reviews on this topic \cite{witten,nakayama,Pol,teschner}. The rough idea is to couple the action of a conformal matter field (say a planar model of statistical physics at its critical point so as to become conformally invariant) to the action of gravity. This gives a couple of random variables $(e^{\gamma \varphi}\hat{g},M)$, where the random metric $e^{\gamma \varphi}\hat{g}$ encodes  the structure of the space and $M$ stands for the matter field. Liouville quantum gravity in $2d$ can thus be seen as a toy model to understand in quantum gravity how the interaction with matter influences the  geometry of space-time. Working in $2d$ is simpler because the Einstein-Hilbert action becomes essentially trivial in $2d$. In the conformal gauge and up to omitting some details, the law 
 of this pair of random variables tensorizes \cite{Pol,cf:Da} and the marginal law of the metric $e^{\gamma \varphi}\hat{g}$ is given by the Liouville action \eqref{actiondefolieetbut}. The only way the metric keeps track of its interaction with the matter field $M$ is through the parameter $\gamma$, called Liouville conformal factor, which can be explicitly expressed in terms of the central charge $c$ of the matter field using the celebrated KPZ result \cite{cf:KPZ}
\begin{equation}\label{eq:central}
\gamma=\frac{\sqrt{25-c}-\sqrt{1-c}}{\sqrt{6}}.
\end{equation}
Therefore, the influence of the matter on the space is parameterized by  $\gamma$.

 To give a rigorous meaning to the definition \eqref{actiondefolieetbut} has been carried out in \cite{DKRV} in the case of the Riemann sphere. We will restrict here to a simpler situation. We will consider the flat unit disk and impose Dirichlet   boundary conditions,  in which case the path integral \eqref{path} with action \eqref{actiondefolieetbut} is also known as Hoegh-Krohn model \cite{Hoegh}.  In that case, we have to interpret the term $$\exp\Big(-\frac{1}{4\pi}\int_D |\partial^{\hat{g}}\varphi |^2_{\hat{g}}\,\lambda_{\hat{g}}(\dd x)\Big)D\varphi$$ as the law of the centered Gaussian Free Field with Dirichlet boundary condition (GFF for short, see \cite{dubedat}). Therefore,
 we replace the Brownian motion in \eqref{beginning} by a shifted GFF $\varphi$ and the nonlinear functional $F$ in \eqref{beginning} is the integrated exponential of this GFF
\begin{equation}\label{laplace}
\E[G(\gamma \varphi)e^{-\frac{4\pi \Lambda}{\gamma^2}\int_D e^{\gamma \varphi }\lambda_{\hat{g}}}].
\end{equation}
Such an exponential, also called Liouville interaction term, is nothing but a Gaussian multiplicative chaos in $2d$. Recall that the theory of Gaussian multiplicative chaos, founded in 1985 by Kahane \cite{cf:Kah}, enables to make sense of the exponential of the GFF though the exponential is not defined on the abstract functional space the GFF lives on.  Our main motivation for considering this framework is to compute the semiclassical limit of $2d$-Liouville  Field Theory.

\begin{figure}[h]
\centering
\subfloat[curvature=1]{\includegraphics[width=0.3\linewidth]{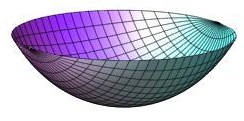}} 
\,\,\subfloat[curvature=4]{\includegraphics[width=0.3\linewidth]{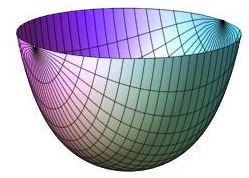}}
\caption{Two surfaces with negative curvature}
\label{bob}
\end{figure}

In a way, we will see that the geometry of space is encoded in the quantity $\mu \gamma^2$. More precisely, we study  the so-called semiclassical limit,
meaning the convergence of the field $\gamma\varphi$ when the parameter $\gamma\to 0$ while keeping fixed the quantity $\Lambda=\mu \gamma^2$ (thus $\mu\to\infty$). 
In the case where $\hat{g}$ is flat, we prove that the field $\gamma \varphi$ (resp. the measure $e^{\gamma \varphi}\,\lambda_{\hat{g}}(\dd x)$)  
converges in law towards the solution $U$ (resp. $e^{U(x)}\lambda_{\hat{g}}(\dd x)$) to the so-called classical Liouville equation
\begin{equation}\label{intro:liouville}
\triangle_{\hat{g}} U-R_{\hat{g}} =8\pi^2\Lambda  e^U.
\end{equation}
Equation \eqref{intro:liouville} appears
when looking for metrics in the conformal equivalence class of $\hat{g} $ with prescribed Ricci scalar curvature $-8\pi^2\Lambda$ (see \cite{troyanov} for instance).
Its solution has the following explicit form on the unit disk $\U$ (equipped with the flat metric, i.e. take $\hat{g}$ equal to the Euclidean metric)
 $$U(x)=2\ln \frac{1-\alpha}{1-\alpha |x|^2},\quad \text{with } \pi^2\Lambda=\frac{\alpha}{(1-\alpha)^2},$$
 with $\alpha \in ]0,1[$ when imposing Dirichlet boundary condition on $\partial \U$. The quantity $-8\pi^2\Lambda$   describes the expected curvature of the random metric  $e^{\gamma \varphi} \hat{g} $ (see Figure \ref{bob}) for small $\gamma$. The fact that this quantity is negative reflects the hyperbolicity  of the geometry of the space.    

We also characterize the leading order fluctuations around this hyperbolic geometry.  They are Gaussian and massive in the sense that the rescaled field $ \varphi-\gamma^{-1}U$ converges towards a massive Gaussian free field in the metric $e^{U(x)}\hat{g}$. The  mass of this free field is $8\pi^2\Lambda$ and thus exactly corresponds to minus the curvature: the more curved the space is, the more massive the Gaussian fluctuations are.

Then we investigate the possible deviations away from this hyperbolic geometry: we prove that the Liouville field satisfies  a large deviation principle with an explicit good rate function, the Liouville action given by \eqref{actiondefolieetbut}. This rate function is non trivial, admits a unique 
minimum at  the solution to the Liouville equation with curvature $8\pi^2\Lambda$. At first sight and in view of the literature on the discrete GFF, one could naively infer that this result is not surprising.  However, the renormalization procedure in the definition of the interaction term in \eqref{laplace}  involves  hidden divergences, which make our statement a priori not necessarily natural. The proof is based on computing the exact asymptotic expansion of   \eqref{laplace} when $G$ is the exponential of a linear function of $\varphi$: in fact, this exact expression (see \eqref{exactequivalent} below when $G=1$) is the main result of this paper as all the other results stem from this
 asymptotic equivalent.

We must mention here that there is a sizeable literature on mathematical studies of discrete random surfaces in all dimensions: see for instance the recent review of Funaki 
\cite{Funaki}. In particular, within the (discrete) framework of gradient perturbations of the GFF, there has been an impressive series of results: large deviation principles (see \cite{DGI}) or central limit theorems (see \cite{NaSpenc} for a convergence to the massless GFF in the whole space) for instance. Nonetheless, the results of this paper bear major differences with the discrete case and, in particular, cannot be derived from the discrete frameworks previously considered. Indeed, in the discrete case, one can work in nice functional spaces whereas, in the continuum setting of this paper, the GFF lives in the space of distributions and not in a functional space. Also, since the Liouville measure does not live in a fixed Wiener chaos, one can not rely on classical estimates in the constructive field theory literature in order to get exponential approximations (see \cite{dembo} for the definition) of the continuum setting by the discrete setting. Besides, an important aspect of our work is the derivation of sharp
asymptotics for the partition function (see \eqref{exactequivalent} below) and more generally the Laplace transform of the field $\varphi$: this is a specific feature of the continuum setting which is essential in the problem of establishing exact relations for the three point correlation function, 
the celebrated DOZZ formula (Dom, Otto, Zamolodchikov and Zamolodchikov) \cite{DO,ZZ} derived on the sphere (see \cite{witten} for a recent article on this problem).

\begin{figure}[h] 
\centering
\includegraphics[width=0.5\linewidth,height=0.25\textheight]{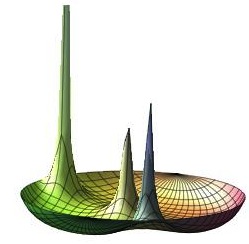}
\caption{Surface with negative curvature and conical singularities}
\label{pikedbob}
\end{figure}

Finally, we investigate the Liouville action with heavy matter insertions. This means that we plug $p$ exponential terms of the form $e^{\frac{\cX_i}{\gamma}X}$ (with $\cX_i\in [0,2[$) in the Liouville action in order to compute the $p$-point correlation functions of LFT. The terminology ``heavy matter'' is related to the fact that the exponential weights  $e^{\frac{\cX_i}{\gamma}X}$ are chosen in a regime so as to affect the saddle points of the action. We prove that the Liouville field $\gamma \varphi$ then concentrates on the solution of the Liouville equation with sources (where $\delta_{z_i}$ stands for the Dirac mass at $z_i$) 
 \begin{equation}\label{intro:source}
\triangle_{\hat{g}} U-R_{\hat{g}} =8\pi^2\Lambda e^U- 2\pi \sum_{i=1}^p \cX_i\delta_{z_i}\quad \quad U_{|\partial\U}=0.
\end{equation}
This equation appears when one looks for a metric with prescribed negative curvature $8\pi^2\Lambda$ and conical singularities at the points $z_1,\dots,z_p$ (see Figure \ref{pikedbob}). Each source $\cX_i\delta_{z_i}$ creates a singularity with shape $\sim \frac{1}{|x-z_i|^{\cX_i}}$ in the metric $e^{U(x)}\hat{g}$. Such singularities are called conical as they are locally isometric to a cone with 
``deficit angle'' $\pi \cX_i$ (see Figure \ref{fig:cone}).
 
\begin{figure}[h] 
\begin{center}
\begin{tikzpicture}[scale=1.5] 
\draw (-20:1) arc (-20:240:1) ;
\draw  (-20:1) -- (0:0) -- (240:1);
\draw[dashed,<->,>=latex] (240:0.4) arc (240:340:0.4);
\draw (290:0.4) node[below]{$\pi\cX$};
\draw[->,>=latex] (2,0) to[bend left] (3,0);
\draw (4,-0.5) .. controls (4,-1) and (6,-1) .. (6,-0.5) -- (5,1.5) --(4,-0.5);
\draw[dashed] (4,-0.5) .. controls (4,0) and (6,0) .. (6,-0.5);
\end{tikzpicture}
\end{center}
\caption{Cone with deficit angle $\pi\cX$. Glue isometrically the two boundary segments of the left-hand side figure to get the cone of the right-hand side figure. Such a cone is isometric to the complex plane equipped with the metric $ds^2=|z|^{-\cX}dzd\bar{z}$.}
\label{fig:cone}
\end{figure}
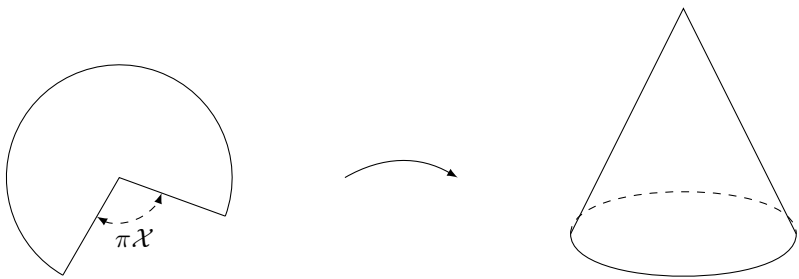

Here again,  the leading order fluctuations around this hyperbolic geometry with conical singularities  are Gaussian and massive in the sense that the rescaled field $ \varphi-\gamma^{-1}U$ converges towards a massive free field in the metric $e^{U(x)}\hat{g}$, where $U$ is now the solution of \eqref{intro:source}. The  mass of this free field is once again minus the curvature, namely $8\pi^2\Lambda$. We also establish a large deviation principle with an explicit good rate function, which is non trivial and admits a unique 
minimum at the solution of the Liouville equation with sources.

\vspace{2mm}

\noindent {\bf Conformal gravity in $4d$.} Let us stress that an analog $4d$-conformal field theory has been studied in the physics literature (see \cite{hamada}) from quantized gravity. The dynamics are governed by the Wess-Zumino action and the Weyl action. Basically, the underlying idea is that the $4d$ Paneitz operator is conformally covariant and yields a notion of $Q$-curvature. To put it simply, we can consider the Euclidean background metric so that the Paneitz operator simply becomes the bilaplacian. The action then becomes 
\begin{equation}\label{actionWZW}
S_{WZW}(\varphi)=\frac{1}{16\pi^2}\int_D \big[\langle  \triangle\varphi,  \triangle\varphi\rangle +16\pi^2\mu e^{\gamma\varphi}\big]\,\lambda(\dd x),
\end{equation}
which is the $4d$ analog of \eqref{actiondefolieetbut} in $4d$ in flat background metric.
The important point for our purposes is that the corresponding free field action ($\mu=0$) generates a log-correlated Gaussian field so that our approach applies word for word. 
We mention that such a theory shares fractal properties similar to $2d$ Liouville field theory, like the geometrical KPZ formula, as proved in \cite{BJRV,Rnew10}. The semiclassical limit is described in terms of the equation 
\begin{equation}\label{eqWZW}
\triangle^2=\Lambda e^U,
\end{equation} which is a prescription of constant (negative) $Q$-curvature. The reader may consult \cite{djadli} for more on this topic of $Q$-uniformization of $4d$-surfaces. Heavy matter operators may be added as well, leading to a perturbed equation \eqref{eqWZW} with additional sources (i.e. Dirac masses). This approach can also be generalized to even larger dimensions by considering the
conformally covariant GJMS operators (from  Graham, Jenne, Mason and Sparling \cite{cf:GJMS}), which take the simple form $\triangle^{d/2}$ in even $d$-dimensional flat space.
\vspace{2mm}

\noindent {\bf Discussion  on possible extensions or other geometries.}
 Extra boundary terms $$\frac{1}{2\pi}\oint_{\partial D} \big[Q r_{\hat{g}}  \varphi +\kappa e^{\gamma \varphi/2}\big]\,d\ell,$$
where $d\ell$ stands for the length element on $\partial D$ in the metric $\hat{g}$ and $r_{\hat{g}}$ for the geodesic curvature on $\partial D$, may be considered as well in the Liouville action \eqref{actiondefolieetbut}. These boundary terms rule the behaviour of the Liouville field on the boundary and gives rise to the boundary Liouville Field theory. We will refrain from considering these extra terms here: our purpose 
is to expose some aspects of  LFT mainly to mathematicians and we wish to avoid these further complications. Yet, this would be a first natural (and non trivial) extension of our work.

In this paper,  we focus on the unit disk with flat geometry but we stress that hyperbolic geometry can be handled the same way. Of particular interest is the construction of LFT on the sphere \cite{DKRV}. In that case, the semi-classical limit exhibits some further interesting features. The point is that the limiting equations requires to construct a hyperbolic structure on the sphere, which is rather not inclined to support such a structure. This can be addressed by taking care of the nature of the insertions in the surface: there are some additional constraints on the insertions $(z_i,\cX_i)_i$, which are called  Seiberg bound in the physics literature. Yet this framework will be addressed elsewhere. This problem  also receives a new growing interest in the community of differential geometry: the reader may consult for instance \cite{bartolucci,battaglia,troyanov} and references therein for more on this topic and other closely related topics, like the Toda system. Indeed, another natural extension of our work could be to consider the large deviations of Toda field theories. In fact, tilting the free field measure with any nonlinear functional of the free field that yields interesting critical points for the Laplace method deserves to be investigated.

\subsection*{Acknowledgements}
The authors wish to address special thanks to Fran\c{c}ois David. This work originates from one of the numerous discussions we have had with him. The authors are also very indebted to Andrea Malchiodi and Yannick Sire who  patiently explained to them how to deal with equation \eqref{intro:source}.

\section{Background and notations}\label{sec:setup}
\subsection{Notations}

{\bf Differential geometry:} The standard gradient,  Laplacian and Lebesgue measure on (a subdomain of) $\R^2$   are denoted by $\partial$, $\triangle$ and $\lambda(\dd x)$ (and sometimes even $\dd x$). We will adopt the following notations related to Riemannian geometry throughout the paper.  On a bounded domain $D$ of $\R^2$,  a smooth function $\hat{g}:D\to ]0,\infty[$ defines  a scalar metric tensor by 
$$(x,u,v)\in D\times \R^2\times\R^2\mapsto \hat{g}(x)\langle u,v\rangle,$$
where $\langle u,v\rangle$ stands for the canonical inner product on $\R^2$. In what follows, we will denote by $\hat{g}(x) \dd x^2$ this metric tensor and sometimes, with a slight abuse of notation, identify $\hat{g}(x) \dd x^2$ with the function $\hat{g}$.  

We can associate to this metric tensor a gradient $\partial^{\hat{g}}$, a Laplace-Beltrami operator $\triangle_{\hat{g}}$, a Ricci scalar curvature $R_{\hat{g}}$,  and a volume form $\lambda_{\hat{g}}$, which are defined by:
\begin{align}
\partial^{\hat{g}} \varphi(x)=&\hat{g}(x)^{-1}\partial \varphi (x)& & R_{\hat{g}}(x)=-\triangle_{\hat{g}}\ln \hat{g}(x)\\
\triangle_{\hat{g}} \varphi(x)=& \hat{g}(x)^{-1}\triangle \varphi (x) & & \int_D\varphi(x)\,\lambda_{\hat{g}}(\dd x)= \int_D\varphi(x) \hat{g}(x)\,\lambda(\dd x).
\end{align}

We denote by $\langle  \partial^{\hat{g}}\varphi, \partial^{\hat{g}}\psi\rangle_{\hat{g}}$ the pairing of two gradients $ \partial^{\hat{g}}\varphi$, $\partial^{\hat{g}}\psi$  in the metric $\hat{g}$, that is
\begin{equation}
\langle  \partial^{\hat{g}}\varphi, \partial^{\hat{g}}\psi\rangle_{\hat{g}}(x)= \hat{g}(x)^{-1}\langle \partial \varphi(x),\partial \psi(x)\rangle.
\end{equation}

\vspace{1mm}

{\bf Green function and conformal maps:} The Green function on a domain $D$ will be denoted by $G_D(x,y)$. By definition, the Green function is the unique function which solves the following equation for all $x \in D$:
\begin{equation*}
\triangle_y G_D(x,y)= - 2 \pi \delta_{x}, \; G_D(x,.)=0 \text { on }\partial D .
\end{equation*} 
Note that with this convention $G_D(x,y)= \ln \frac{1}{|y-x|}+\varphi(x,y)$ where $\varphi$ is a smooth function on $D$ (not smooth on the whole boundary $\partial D$). By conformal map $\psi:\tilde{D}\to D$, we will always mean a bijective bi-holomorphic map from $\tilde{D}$ onto $D$. Recall that the Green function is conformally invariant in the sense that $G_{D} \circ \psi= G_{\tilde{D}}$. 

\vspace{1mm}

{\bf Functional spaces:} $C^\infty_c(D)$ stands for the space of smooth compactly supported functions on $D$. We denote $\mathbb L^p(D)$ the standard space of functions $u$ such that $|u|^p$ is integrable. Classically, if $D$ is  a (say) smooth bounded domain, we define the space $H^1_0(D)$ as the completion of $C^\infty_c(D)$ with respect to the (squared) norm $ |\varphi|_{H^1}^2=\int_D|\partial \varphi|^2\,dx$. Let us recall a few facts on $H^1_0(D)$ and its dual $H^{-1}(D)$ which we need in the paper: see \cite[section 4.2]{dubedat} for instance. The space $H^{-1}(D)$ is defined as the Banach space of  continuous linear functionals $f$ on $H^1_0(D)$ equipped with the norm
\begin{equation*}
|f|_{H^{-1}}= \underset{\varphi \in H^1_0(D), \: |\varphi|_{H^1} \leq 1 }{\sup} f(\varphi)  
\end{equation*}      
where we denote $f(\varphi)  $ the distribution $f$ applied at $\varphi$. The dual space of $(H^{-1}(D), |.|_{H^{-1}})$ is once again a Banach space. The space $H^1_0(D)$ can then be equipped with the weak$^\star$ topology, i.e. the topology induced by the linear functionals $\varphi\in H^1_0(D) \mapsto f(\varphi)$ for all $f$ in $H^{-1}(D)$: see \cite[Appendix B]{dembo}. This topology coincides with the standard weak topology on $H^1_0(D)$. We will use this remark when establishing the large deviation principle.

%

\subsection{Gaussian Free Fields}\label{sec:cut}
 Let $\P,\E$ denote the probability law and expectation of a standard probability space; the corresponding space of variables $Z$ such that $|Z|^p$ is integrable will be denoted by $\mathbb L_p$. On this space, the centered Gaussian Free field  $X$ with mass $m\ge 0$ on a planar domain $D\subset \R^2$  and Dirichlet boundary condition
is the Gaussian field whose covariance function is given by the Green function $G_D^m$ (recall that, for $m\equiv 0$, we denote $G_D=G_D^0$) of the problem
$$\triangle u-  mu=-2\pi f \text { on }D,\quad u_{|\partial D}=0,$$ where $m\geq 0$ is a function defined on $D$. When the mass satisfies $m\not\equiv 0$, one usually talks about Massive Free Fied (MFF for short) whereas one rather uses the terminology Gaussian Free Field (GFF for short) for the massless field with $m\equiv 0$. Therefore, for any smooth compactly supported   functions $f,h$ on $D$ 
 $$\E\Big[X (f)X (h)\Big]=\iint_{D\times D} f(x)G^m_D(x,y)h(y) \dd x\dd y.$$
Almost surely, the GFF lives on the space $ H^{-1}(D)$ (see \cite{dubedat}). 

\begin{rem}
In fact, $X$ belongs to the standard Sobolev space $H^{-s}(D)$ for all $s>0$ (see \cite{dubedat} for further details) but for simplicity, we refrain from considering this framework. Many theorems of this paper could in fact be strengthened to the topology of $H^{-s}(D)$; for instance, this is the case for the large deviation result, i.e. Theorem \ref{th:ldp}, by using \cite[Theorem 4.2.4]{dembo} which enables to strengthen topologies in large deviation principles.    

\end{rem}
  
\begin{rem}
We could treat other boundary conditions as well but for simplicity, we restrict to the case of Dirichlet boundary conditions. In the case when the action possesses boundary terms, it is more relevant to consider a GFF with Neumann boundary conditions in the following.
\end{rem}

In what follows, we need to consider   cutoff approximations of the Gaussian free field $X$ on $D$.  The cutoff may be any of the following: 
\begin{description}
\item[-White Noise (WN)] (see \cite{Rnew10,lacoin,review}): The Green function $G_D$ on $D$ can be written as 
$$G_D(x,y)=\pi\int_0^\infty   p(r,x,y)\,dr.$$
where $p(t,x,y)$ will denote the transition densities of the Brownian motion on $D$ killed upon touching $\partial D$.
A formal way to define   the  Gaussian field $X $   is to consider  a white noise  $W$ on $\R_+\times D$ and define
\begin{equation}\begin{split}\label{GFFXY}
X(x)&=\sqrt{\pi}\int_{0}^{\infty}\int_{D}  p(r/2,x,y)\,W^X(dr,dy).
\end{split} \end{equation}
We define the approximations $X_{\gep}$  by integrating over $(\gep^2,\infty)\times D$ in \eqref{GFFXY} instead of $(0,\infty)\times D$. The covariance function for these approximations is given by 
\begin{equation}\label{covar1}
\E[X_\gep(x)X_{\gep'}(y)]=\pi\int_{\gep^2\vee\gep'^2}^{\infty}  p(r,x,y)\,dr.
\end{equation}
\item[-Circle Average (CA)] (see \cite{cf:DuSh}): We introduce the circle averages $(X_\gep)_{\gep\in]0,1]}$   of radius $\gep$, i.e. $X_\gep(x)$ stands for the mean value of $X$ on the circle centered at $x$ with radius $\gep$. We could also consider more general mollifiers (see \cite{legrosrobert,RV}).
\item[-Orthonormal Basis Expansion (OBE)]  (see  \cite{dubedat,cf:DuSh,lacoin,review}): We consider an orthonormal basis $(f_k)_{k\geq 1}$ of $H^1_0(D)$ made up  of continuous functions and the projections of $X$ onto this orthonormal basis, namely we define the sequence of i.i.d. Gaussian random variables:
\begin{equation*}
\gep_k= \frac{1}{2\pi}\int_D \langle\partial X(x), \partial f_k(x)\rangle dx.
\end{equation*}
The projections  of $X$   onto the span of $\{f_1,\dots,f_n\}$ are given by $X_n(x)= \sum_{k=1}^n  \gep_k f_k(x)$.
\end{description}

In any of the above three cases, the family of cutoff approximations  will be denoted by $(X_\epsilon)_\epsilon$ (with $\epsilon=e^{-n}$ in the case of  (OBE)).

  \begin{rem}
      Another way to obtain an approximation of the Gaussian  Free Field, is to consider first a lattice version.
      We do not mention it here as it is slightly less convenient than the other options in our setup: 
      the lattice field and the continuous field are not defined on the same space
      and it is quite technical to construct a relevant coupling between the two of them.
      \end{rem} 

\subsection{Gaussian multiplicative chaos}\label{sub:chaos}
For the three possible cutoff approximations $(X_\epsilon)_\epsilon$ of the GFF and for $\gamma\in [0,2[$, we consider the random measure on $D$ defined by 
\begin{equation}\label{mido}
 e^{\gamma X(x)}\,dx=\lim_{\epsilon\to 0}\epsilon^{\frac{\gamma^2}{2}} e^{\gamma X_\epsilon (x)}\,dx.
\end{equation}
The limit holds almost surely and is understood in the sense of weak convergence of measures. This has been proved in \cite{cf:Kah} for the cutoff family (WN) and (OBE) and in \cite{cf:DuSh}  for (CA). The limit is non trivial if and only if $\gamma<2$ (see \cite{cf:Kah}). For these three possible cutoffs, the limiting objects $(X,e^{\gamma X(x)}\,dx)$ that
we get by taking the limit as $\epsilon\to 0$ have the same law \cite{review}.

\subsubsection{The Wick Notation}\label{wick}
In the paper we make extensive use of Wick notation for the exponential.
If $Z$ is a Gaussian variable with mean zero and variance $\sigma^2$, its Wick $n$-th power ($n\in \N$) is defined by 
\begin{equation}
:Z^n: \,\,= \sum_{m=0}^{\lceil n/2 \rceil}\frac{(-1)^m n!}{m!(n-2m!)2^m} \sigma^{2m} Z^{n-2m}= \sigma^n H_n(\sigma^{-1} Z)
\end{equation}
where $H_n$ is the $n$-th Hermite Polynomial. 
If $Z$ is not centered then $:Z^n:$ is understood as $:Z^n:\,\,=\,\, :\tilde Z^ n:$ where $\tilde Z:= Z-\E[Z]$.


\medskip

This definition is designed to make  the Wick monomials orthogonal to each other. More precisely if $(Z,Y)$ is a Gaussian vector we have
\begin{equation}\label{scalar}
\E\left[:Z^n: :Y^m: \right] = n! \ind_{n=m}\E\left[ZY\right]^n.
\end{equation}


The Wick exponential is defined formally as the result of the following expansion in Wick powers
\begin{equation}\label{supertaylor}
:e^{\gamma Z}:\,\, = \sum_{n=0}^{\infty} \frac{\gamma^n :Z^n:}{n!}.
\end{equation}
A bit of combinatorics with Wick monomials leads to the following identity
\begin{equation}\label{wickexpo}
:e^{\gamma Z}:= \exp\left( \gamma \tilde Z-\frac{ \sigma^2 \gamma^2}{2}\right).
\end{equation}
Most of the time we will use the Wick notation for Gaussian fields that are distributions rather than Gaussian variables, but we specify the meaning of this notation below.

\subsubsection{Wick Notation for Gaussian Fields}
We consider    a Free Field $X$ defined on a planar domain $D$ (we stress that the basics below extend without changes to any other log-correlated Gaussian field). 
We define the Wick powers and the Wick exponential as a distribution on $D$, by taking the limit of
cutoff approximations of $X$ constructed in Section \ref{sec:cut}. Indeed from the formula \eqref{scalar}, 
the reader can check that for any smooth function $u$, and any $n$
the sequence indexed by $\gep>0$

\begin{equation}
\int_D :X^n_\gep(x): u(x) \dd x
\end{equation}
is Cauchy in $\mathbb L_2$, and thus admits a   limit when $\gep \to 0$.
The limit is denoted by $\langle :X^n:,u \rangle$. With some additional work
one can check that $:X^n:$ defines a random distribution acting on $u$.
%
%
For $\gamma<2$, one can also consider the limit $:e^{\gamma X(x)}:\,dx$ in the sense of weak convergence of measures of the family $(:e^{\gamma X_\epsilon(x)}:\,dx)_\epsilon$ and one can check that
\begin{equation}
e^{\gamma X(x)}\,dx= \,\,:e^{\gamma X}:\, C(x,\U)^{\gamma^2/2}dx,
\end{equation}
where $C(x,\U)$ denotes the conformal radius and the measure is defined in subsection \ref{sub:chaos}.
Notice that for $ \gamma<\sqrt{2}$, the limit can also be obtained from the series expansion \eqref{supertaylor}: for all $u\in \mathbb L^p(D)$ for some $p>1$
\begin{equation}\label{eqdefconvergence}
\int_D :e^{\gamma X(x)}: u(x)\,\dd x=\sum_{n\geq 0} \frac{\gamma^n}{n!} \int_D  :\! X^n(x)\! : u(x)\,\dd x,
\end{equation}
where the above sum converges in $\mathbb{L}_2$.

%

\section{Semiclassical limit}\label{sec:semi}
\subsection{The semiclassical limit}

In this section and for pedagogical purpose, we make one simplification by not taking into account a possibly curved space. As mentioned in introduction, the case of the hyperbolic geometry can be handled the same way whereas additional
difficulties appear on the sphere, which the methods of this paper cannot handle.   

We equip the unit disk $\U$ with the flat metric, i.e. the metric associated to the metric tensor $g=1$ on $\U$. We consider a GFF $X$ on $\U$ with Dirichlet boundary condition. We consider a cosmological constant $\mu\geq 0$ and a Liouville conformal factor $\gamma\in]0,2]$. We set 
$$Q=\frac{2}{\gamma}+\frac{\gamma}{2}.$$

We define the law $\P_{\mu,\gamma}$ of the Liouville field  $X$ on $\U$ associated to $(\mu,\gamma)$ as the tilted version of $\P$
as follows:
\begin{equation}\label{defliouville}
\E_{\mu,\gamma}[F(X)]=Z^{-1}_{\mu,\gamma}\E\Big[F(X)\exp\Big(-4\pi \mu\int_{{\U}}e^{\gamma X(x)} \dd x\Big)\Big]
\end{equation}
where  
$$  Z_{\mu,\gamma}=\E\Big[\exp\Big(-4\pi \mu\int_{\U}e^{\gamma X(x)}\dd x \Big)\Big]$$ and $F$ is any bounded continuous functional on $H^{-1}(\U)$.

\medskip

Our aim is to determine the asymptotic behavior of the fields $\gamma X$ when $\gamma$ tends to zero and $\mu$ tends to infinity simultaneously
while satisfying the relation
\begin{equation}\label{asymp}
\mu \gamma^2=\Lambda,
\end{equation}
for a fixed positive $\Lambda$.

\medskip

We claim 
\begin{theorem}\label{th:semi} 
Assume that $\gamma\to 0,\mu\to\infty$ under the constraint \eqref{asymp}. The field  $\gamma X$  concentrates on the solution of the classical Liouville equation
\begin{equation}\label{eq:class}
\triangle U =8\pi^2\Lambda  e^U
\end{equation}
 with zero boundary condition on $\U$.   More precisely
\begin{enumerate}
\item The partition function has the following asymptotic behavior at the exponential scale  
\begin{equation}
\lim_{\gamma\to 0} \gamma^2 \log Z_{\mu,\gamma}= -  \frac{ 1}{4\pi  } \int_\U ( |\partial U(x)|^2+16\pi^2\Lambda e^{ U(x)}) \dd x =: \tf(\gL).
\end{equation}
\item More precisely we have the following equivalent as $\gamma\to 0$
\begin{equation}\label{exactequivalent}
Z_{\mu,\gamma}\sim e^{\gamma^{-2} \tf(\gL)}
\exp\left(-  2\pi\Lambda  \int_\U e^{ U(x)}\ln C(x,\U) \dd x \right)  \E\left[ \exp\left(-2\pi\Lambda\int_\U  e^{U(x)}  : X(x)^2 : \dd x   \right)  \right ],
\end{equation}

 where $:X^2:$ is the standard Wick-ordered square field, i.e. $:X(x)^2:= \underset{\epsilon \to 0}{\lim} \; X_\epsilon(x)^2- \E[X_\epsilon(x)^2 ]$ where $X_\epsilon$ is the cutoff field (see subsection \ref{sec:cut}).
\item The field $\gamma X$ converges in probability in  $ H^{-1}(\U)$   as $\gamma\to 0$  towards $U$.
\item Both  random measures $:e^{\gamma X}: \dd x$ and $e^{\gamma X} \dd x$ converge in law in the sense of weak convergence of measures towards $e^{U(x)}\,±\dd x$ as $\gamma\to 0$.
\item  the field $X-\gamma^{-1}U$ converges in law in  $H^{-1}(\U)$  as $\gamma\to 0$ towards a Massive Free Field in the metric $\hat{g}=e^{U(x)}\dd x^2$ with Dirichlet boundary condition and mass $8\pi^2\Lambda$ , that is a Gaussian field with covariance kernel given by the Green function of the operator $2\pi (8\pi^2\Lambda-\triangle_{\hat{g}})^{-1}$ with Dirichlet boundary conditions.
\end{enumerate}
 
\end{theorem}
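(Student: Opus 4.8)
The plan is to prove item (2) first --- the precise equivalent \eqref{exactequivalent} --- since parts (1), (3), (4) and (5) will all be corollaries. The strategy is a Laplace-type (saddle-point) analysis of the partition function $Z_{\mu,\gamma}=\E[\exp(-\tfrac{4\pi\Lambda}{\gamma^2}\int_\U e^{\gamma X}\,\dd x)]$, where the ``energy'' competition takes place between the Gaussian weight of $X$ and the exponential functional. The natural change of variables is $X=\gamma^{-1}U+Y$, i.e. shift the field by $\gamma^{-1}U$ where $U$ solves the classical Liouville equation \eqref{eq:class}. Under the Cameron--Martin shift by the (deterministic, $H^1_0$) function $\gamma^{-1}U$, the Radon--Nikodym density is $\exp\big(-\tfrac{1}{\gamma^2}\cdot\tfrac{1}{4\pi}\int_\U|\partial U|^2 - \tfrac{1}{\gamma}\cdot\tfrac{1}{2\pi}\int_\U\langle\partial U,\partial X\rangle\big)$. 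One then has to control $\int_\U e^{\gamma X}=\int_\U e^{U}e^{\gamma Y}$; writing $e^{\gamma Y}$ in Wick form, $e^{\gamma Y}=\,:e^{\gamma Y}:\,C(x,\U)^{\gamma^2/2}=\,:e^{\gamma Y}:\,(1+\tfrac{\gamma^2}{2}\ln C(x,\U)+o(\gamma^2))$ and Taylor-expanding $:e^{\gamma Y}:\,=1+\gamma Y+\tfrac{\gamma^2}{2}:Y^2:+\dots$, the linear term $\gamma\int e^U Y$ is designed to cancel exactly the cross term $\tfrac{1}{\gamma}\tfrac{1}{2\pi}\int\langle\partial U,\partial X\rangle$ precisely because $U$ satisfies $\triangle U = 8\pi^2\Lambda e^U$ (integrate by parts: $\tfrac{1}{2\pi}\int\langle\partial U,\partial Y\rangle = -\tfrac{1}{2\pi}\int Y\triangle U = -4\pi\Lambda\int e^U Y$, matching $\tfrac{4\pi\Lambda}{\gamma^2}\cdot\gamma\int e^U Y$). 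What survives at order $\gamma^0$ inside the exponential is exactly $-2\pi\Lambda\int_\U e^U\ln C(x,\U)\,\dd x - 2\pi\Lambda\int_\U e^U:Y^2:\,\dd x$, giving \eqref{exactequivalent} (noting that $Y$ has the law of $X$). The prefactor $\gamma^{-2}\tf(\Lambda)$ comes from $-\tfrac{1}{4\pi\gamma^2}\int|\partial U|^2 - \tfrac{4\pi\Lambda}{\gamma^2}\int e^U$, and using $\int e^U$-identities one checks this equals $-\tfrac{1}{4\pi\gamma^2}\int(|\partial U|^2+16\pi^2\Lambda e^U)$.

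The main obstacle is to make the expansion of $\exp(-\tfrac{4\pi\Lambda}{\gamma^2}\int_\U e^U e^{\gamma Y})$ rigorous uniformly enough to extract the $\gamma\to 0$ limit of the expectation. Heuristically the argument of the exponential is $-\tfrac{4\pi\Lambda}{\gamma^2}\int e^U - 4\pi\Lambda\int e^U Y - 2\pi\Lambda\int e^U:Y^2: + R_\gamma$ with $R_\gamma\to 0$, but $\int e^U:Y^2:$ is not bounded below and the higher Wick terms $\gamma^{k-2}\int e^U\,:Y^k:$ need to be shown negligible. I would proceed by: (a) a lower-bound/upper-bound sandwich: for the upper bound on $Z_{\mu,\gamma}$ use that $e^{\gamma Y}\ge 1+\gamma Y +\tfrac{\gamma^2}{2}:Y^2:\,C^{\gamma^2/2}$ is false pointwise, so instead truncate $Y$ on a favourable event $\{|Y_\epsilon(x)|\le \gamma^{-1/2}\}$ etc.; more cleanly, (b) use the cut-off field $Y_\epsilon$, for which everything is a genuine Gaussian computation, write $\int e^U e^{\gamma Y} = \lim_\epsilon \epsilon^{\gamma^2/2}\int e^U e^{\gamma Y_\epsilon}$, expand, and pass to the limit controlling the error by the $\mathbb L^2$-convergence of $\int e^U:Y_\epsilon^2:$ and moment bounds on $\int e^U e^{\gamma Y_\epsilon}$ (Kahane-type). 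The key quantitative input is that the functional $\Psi(Y):=-2\pi\Lambda\int_\U e^{U(x)}:Y^2:\,\dd x$ has finite exponential moment $\E[e^{\Psi(X)}]<\infty$ --- this is where one needs $\Lambda$ (equivalently the ``mass'' $8\pi^2\Lambda$) to interact correctly with the GFF covariance on $\U$; concretely $\E[e^{\Psi(X)}]$ is a (regularized Fredholm) determinant of $I + 4\pi\Lambda\,(\text{mult. by }e^U)\circ(-\triangle)^{-1}$, which is finite because $e^U\in\mathbb L^\infty(\U)$ and the resolvent is trace-class enough; relatedly this determinant is finite precisely because the massive operator $8\pi^2\Lambda - \triangle_{\hat g}$ in the metric $\hat g = e^U\,\dd x^2$ is invertible, which is exactly the statement appearing in item (5).

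Given item (2), the remaining items follow quickly. Item (1) is immediate by taking $\gamma^2\log$ of \eqref{exactequivalent} since the two correction factors are $O(1)$. For item (3), I would apply the same shift-and-expand computation to $\E[e^{\ell(\gamma X)}\exp(-\tfrac{4\pi\Lambda}{\gamma^2}\int e^{\gamma X})]/Z_{\mu,\gamma}$ for a test functional $\ell\in H^{-1}(\U)$ linear (so $e^{\ell(\gamma X)}$ probes the Laplace transform of $\gamma X$): the shift by $\gamma^{-1}U$ produces $e^{\ell(U)}$ times a factor $e^{\ell(\gamma Y)}\to 1$, so $\E[e^{\ell(\gamma X)-\ell(U)}]\to 1$, and by general principles (the $H^{-1}$-valued Gaussian tail of $\gamma X - U = \gamma Y$ being Gaussian and $\gamma$-small) one upgrades convergence in law to a constant $U$ to convergence in probability. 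Item (4) follows from item (3) together with the continuity of $\varphi\mapsto e^{\gamma\varphi}\,\dd x$ along the relevant events, or more directly by testing $\int e^{\gamma X}f$ against smooth $f$ and reusing the expansion to see it converges to $\int e^U f$. Item (5) follows from refining item (3): the fluctuation field $X-\gamma^{-1}U = Y$ under the tilted measure $\P_{\mu,\gamma}$ has, by \eqref{exactequivalent}-type computations applied to $\E[e^{\ell(Y)}\cdots]$, a Laplace transform converging to $\exp(\tfrac12 \mathrm{Var}_{\hat m})$ where the limiting covariance is that of the Gaussian obtained by tilting the GFF law by $\exp(-2\pi\Lambda\int e^U :Y^2:)$; a direct computation (completing the square in the Gaussian density with the extra quadratic form $4\pi\Lambda\,e^U$) identifies this tilted Gaussian as the centered Gaussian field with covariance $(-\triangle + 8\pi^2\Lambda e^U)^{-1}$ up to the $2\pi$ normalization, i.e. the Green function of $2\pi(8\pi^2\Lambda - \triangle_{\hat g})^{-1}$ in the metric $\hat g = e^U\,\dd x^2$, which is the asserted Massive Free Field. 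The one point to be careful about throughout is that $U \in H^1_0(\U)\cap C^\infty(\U)$ with $e^U$ bounded, which is classical elliptic regularity for \eqref{eq:class} on the disk (indeed $U$ has the explicit form displayed in the introduction), so all the integrations by parts and the boundedness of the quadratic forms are justified.
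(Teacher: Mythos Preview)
Your approach is essentially the paper's: perform the Girsanov/Cameron--Martin shift by $\gamma^{-1}U$, observe that the linear term cancels precisely because $U$ solves \eqref{eq:class}, identify the surviving quadratic Wick term, and recognize the resulting quadratic tilt of the GFF as the Massive Free Field. The paper organizes the algebra slightly differently (it extracts deterministic prefactors first and writes the Girsanov density as $\exp(-\tfrac{4\pi\Lambda}{\gamma}\int e^U X - \tfrac{8\pi^2\Lambda^2}{\gamma^2}\iint e^{U+U}G_\U)$ rather than via $\int\langle\partial U,\partial X\rangle$), but this is the same computation, and your derivations of items (1), (3), (4), (5) from item (2) match the paper's.

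However, there is a genuine gap at the point you yourself flag as ``the main obstacle''. After the shift you must control
\[
\E\Big[\exp\Big(-\tfrac{4\pi\Lambda}{\gamma^2}\int_\U e^{U}\big(e^{\gamma X}-1-\gamma X\big)\,\dd x\Big)\Big],
\]
and knowing that the exponent converges in $\mathbb L_2$ to $-2\pi\Lambda\int e^U(:X^2:+\ln C)$ is not enough: you need \emph{uniform integrability}, i.e.\ $\sup_{\gamma}\E[\exp(\cdots)]<\infty$. Your remark that $\E[e^{\Psi(X)}]<\infty$ (the Fredholm determinant) only shows the \emph{limit} is finite; it says nothing about the pre-limit family. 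Your options (a) and (b) point in the right direction but do not close the gap: the quantity $-\gamma^{-2}\int g(e^{\gamma X}-1-\gamma X)$ is \emph{not} bounded above by anything obvious (the renormalized exponential can be small, so the naive inequality $e^u\ge 1+u$ does not help), and ``Kahane-type moment bounds on $\int e^U e^{\gamma Y_\epsilon}$'' do not directly control the exponential of this difference. The paper handles this in a separate lemma (its Lemma~\ref{zidane}) by a rather delicate argument: introduce the white-noise cutoff at the specific scale $\epsilon=e^{-\gamma^{-1/8}}$, use the deterministic bound $e^u-1-u\ge 0$ on the cutoff field to get a crude $e^{\frac12|\log\epsilon|}$ bound on a bad event of small probability, control the cutoff remainder via a third-moment estimate on a good event, and then remove the cutoff by bounding $M_\epsilon-M$ through an FKG argument for the white noise (computing $\phi(t)=\E[e^{t(M_\epsilon-M)}\mid\cF_\epsilon]$ and showing $\phi'(t)\le t\phi(t)Z_\epsilon$ with $Z_\epsilon$ small). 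This is the technical heart of the proof, and your sketch does not supply it.
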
 

\begin{rem}
The above theorem shows in a way that the metric $e^{\gamma X(x)}\dd x^2$ converges  as $\gamma\to 0$ towards the metric on the disk with negative curvature $-8\pi^2\Lambda$. Actually, we only treat here the case of the curvature or volume form of the metric. But the same argument can be adapted for instance to prove the convergence of the associated Brownian motion defined in \cite{GRV,GRV-FD}.
\end{rem}

\subsection{The large deviation principle}

Now we focus on a Large Deviation Principle. Recall that $U$ is the solution to the classical Liouville equation \eqref{eq:class}. For $f\in H^1_0(\U)$, we consider the  weak solution $V$ of the perturbed Liouville equation (see Theorem \ref{th:liouville1})
\begin{equation}\label{boucherie}
\triangle V=8\pi^2\Lambda e^{V(x)}-2\pi   f(x), \quad \text{with }V_{|\partial \U}=0,
\end{equation}
and we set  
$$\tf  (\Lambda,f)=  -  \frac{ 1}{4\pi  } \int_\U ( |\partial V(x)|^2+16\pi^2\Lambda e^{ V(x)}) + \int_\U f(x)( V(x)-U(x))\dd x.$$ 
In the course of the proof of our large deviation result: Theorem \ref{th:ldp}, we will check that the mapping $f \in H^1_0(\U)\mapsto \tf(\Lambda,f)-\tf(\gL)$ is convex, G\^ateaux-differentiable and weakly lower semi-continuous (for the weak$^{\star}$ topology). 

\medskip

We define its Fenchel-Legendre transform as follows  by 
\begin{equation}\label{gignac}
\forall h\in H^{-1}(\U), \quad I^*(h)=\sup_{f\in H^{1}_0(\U)}\{h(f)-\tf  (\Lambda,f)+\tf  (\Lambda)\}.
\end{equation}
\begin{proposition}\label{ratefunction}
The function $I^*$ is a good rate function with explicit expression 
\begin{equation*}
I^*(h)=\begin{cases} 
E(U+h)-E(U), &\text{if } h \in H^1_0(\U)  ,\\
+\infty,& \text{otherwise},
\end{cases}
\end{equation*} where
$$\forall u\in H^1_0(\U),\quad E(u)=  \frac{ 1}{4\pi  } \int_\U ( |\partial u(x)|^2+16\pi^2\Lambda e^{ u(x)})\dd x.$$
In particular, we have $I^*(h)>0$ except for $h=0$.
\end{proposition}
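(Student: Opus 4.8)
The plan is to compute the Fenchel--Legendre transform $I^*$ explicitly by first simplifying the function $\tf(\Lambda,f)$ using the variational characterization of the solution $V$ to the perturbed Liouville equation \eqref{boucherie}. The key observation is that $V$ is the unique critical point of the strictly convex functional
\begin{equation*}
J_f(u)=\frac{1}{4\pi}\int_\U\big(|\partial u|^2+16\pi^2\Lambda e^u\big)\dd x-\int_\U f(x)u(x)\dd x
\end{equation*}
on $H^1_0(\U)$; indeed the Euler--Lagrange equation of $J_f$ is precisely $\triangle u=8\pi^2\Lambda e^u-2\pi f$ with zero boundary data. Thus $V=\argmin_{u\in H^1_0(\U)}J_f(u)$, and moreover $U$ (the solution to \eqref{eq:class}) equals $\argmin_u J_0(u)=\argmin_u E(u)$. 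Therefore $\tf(\Lambda,f)-\tf(\Lambda)=-J_f(V)+E(U)\int_\U f(x)U(x)\dd x$; being a bit careful with the extra linear term, one gets $\tf(\Lambda,f)-\tf(\Lambda)=-\big(J_f(V)-\int_\U f U\big)+E(U)=-\min_{u}J_f(u)+\int_\U fU\,\dd x + E(U)$. The point is that $\min_u J_f(u)=\min_u\{E(u)-\int_\U f u\,\dd x\}$ is, up to sign, the Legendre transform of $E$ evaluated at $f$.

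With this in hand, for $h\in H^1_0(\U)$ I would write
\begin{equation*}
I^*(h)=\sup_{f\in H^1_0(\U)}\Big\{h(f)-\tf(\Lambda,f)+\tf(\Lambda)\Big\}=\sup_{f}\Big\{\int_\U f(h-U)\,\dd x+\min_{u\in H^1_0(\U)}\big(E(u)-\textstyle\int_\U f u\,\dd x\big)-E(U)\Big\}.
\end{equation*}
Interchanging the order of the suprema/infima (justified since $E$ is convex, coercive and lower semicontinuous and the inner problem is nicely behaved), and writing the inner minimum as $\inf_u\{E(u)-\int f u\}$, the combined optimization over $f$ of $\int f(h-U-u)\,\dd x$ forces $u=h-U$ via the linear term — i.e. $\sup_f\int_\U f\cdot(h-U-u)\,\dd x$ equals $0$ if $u=h-U$ in $H^1_0(\U)$ and $+\infty$ otherwise (here one uses that $H^1_0(\U)$ separated by itself through the $L^2$ pairing, or more precisely that $H^1_0$ is dense and the pairing is nondegenerate). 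This collapses $I^*(h)$ to $E(h-U+U)-E(U)=E(h)-E(U)$... wait, more carefully to $E(U+h')-E(U)$ with the appropriate shift; tracking the shift $u=h-U$ against the argument of $E$ gives exactly $I^*(h)=E(U+h)-E(U)$ when $h\in H^1_0(\U)$. For $h\in H^{-1}(\U)\setminus H^1_0(\U)$, choosing $f$ along a sequence witnessing $|h-$(projection)$|$ blowing up shows the supremum is $+\infty$; concretely, since $\tf(\Lambda,f)-\tf(\Lambda)$ grows at most like a term controlled by $|f|_{H^{-1}}$-type quantities while $h(f)$ is unbounded on the unit ball when $h\notin H^1_0$, one gets $I^*(h)=+\infty$.

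It then remains to record the standard properties: $I^*$ is a good rate function, meaning it is lower semicontinuous and has compact sublevel sets for the weak$^\star$ topology on $H^{-1}(\U)$. Lower semicontinuity follows because $I^*$ is a supremum of weak$^\star$-continuous affine functionals $h\mapsto h(f)-\tf(\Lambda,f)+\tf(\Lambda)$. For the compactness of sublevel sets $\{I^*\le a\}$: from the explicit formula, $I^*(h)\le a$ forces $h\in H^1_0(\U)$ with $E(U+h)\le E(U)+a$, and since $E(u)\ge\frac{1}{4\pi}|u|_{H^1}^2$ (the exponential term being nonnegative), this bounds $|U+h|_{H^1}$ hence $|h|_{H^1}$; a ball in $H^1_0(\U)$ is weak$^\star$-compact (and weak$^\star$ closedness of the sublevel set follows from lower semicontinuity), giving compactness. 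Finally, strict positivity away from $0$: since $U$ is the unique minimizer of the strictly convex functional $E$ on $H^1_0(\U)$, we have $E(U+h)>E(U)$ for every $h\in H^1_0(\U)\setminus\{0\}$, hence $I^*(h)>0=I^*(0)$.

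The main obstacle I anticipate is rigorously justifying the interchange of $\sup_f$ and $\inf_u$ and the resulting ``collapse'' of the variational problem — i.e. proving that $\sup_{f\in H^1_0}\int_\U f v\,\dd x$ equals $0$ or $+\infty$ according to whether $v=0$ in the relevant space, and handling the $H^{-1}$-versus-$H^1_0$ dichotomy cleanly. This is essentially a convex-duality/biconjugation argument (showing $E$, suitably extended by $+\infty$ off $H^1_0$, equals its own biconjugate on $H^{-1}$), and the delicate point is matching the topological pairing $H^{-1}$--$H^1_0$ with the $L^2$ pairing used to define $\tf(\Lambda,f)$, together with verifying the differentiability and semicontinuity claims about $f\mapsto\tf(\Lambda,f)$ that were deferred to the proof of Theorem \ref{th:ldp} and are needed to apply the standard Legendre-duality machinery.
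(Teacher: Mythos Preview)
Your approach via convex duality is genuinely different from the paper's. The paper does not attempt a minimax swap; instead it computes the G\^ateaux derivative of $f\mapsto h(f)-\tf(\Lambda,f)+\tf(\Lambda)$, observes that the critical point (when it exists) is the $f$ for which the perturbed solution $V_f$ equals $U+h$, and evaluates there to get $I^*(h)=E(U+h)-E(U)$. This works only for $h$ in the dense set $\mathcal S=\{h\in H^1_0:\Delta h\in H^1_0\}$; the formula is then pushed to all of $H^1_0$ by a two-sided density argument using lower semicontinuity of $I^*$ and strong $H^1_0$-continuity of $E$. Your duality picture is cleaner conceptually, and in fact the \emph{upper} bound $I^*(h)\le E(U+h)-E(U)$ does fall out immediately from weak duality $\sup_f\inf_u\le\inf_u\sup_f$ once you observe that plugging $u=U+h$ makes the expression independent of $f$. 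But for the \emph{lower} bound you need an actual minimax equality, and if you try to produce a saddle point you will find that the optimal $f$ must satisfy $V_f=U+h$, i.e.\ $f=4\pi\Lambda e^{U+h}-\frac{1}{2\pi}\Delta(U+h)$, which lies in $H^1_0$ only when $\Delta h\in H^1_0$. So the rigorous version of your argument collapses to the same dense-set-plus-extension scheme the paper uses; you have identified this as the main obstacle, and it is.

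There is a genuine error in your treatment of $h\in H^{-1}\setminus H^1_0$. You write that ``$h(f)$ is unbounded on the unit ball when $h\notin H^1_0$'', but for $h\in H^{-1}$ the pairing $h(f)$ is bounded by $|h|_{H^{-1}}|f|_{H^1}$, so it is bounded on the $H^1$-unit ball regardless. What distinguishes $h\in H^1_0$ is boundedness of $f\mapsto h(f)$ on the $H^{-1}$-unit ball of $f$, and to exploit this you must first show that $\tf(\Lambda,f)-\tf(\Lambda)$ is controlled by $|f|_{H^{-1}}$ rather than $|f|_{H^1}$. The paper does exactly this: it proves an a priori $H^1$ bound on $V_f$ uniform over $\{|f|_{H^{-1}}\le 1\}$ by testing \eqref{boucherie} against $V_f$ itself and using $(e^u-1)u\ge 0$, then deduces that $I^*(h)<\infty$ forces $h$ to be a bounded linear functional on the $H^{-1}$-unit ball, hence $h\in H^1_0$. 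Your sketch does not supply this estimate and the stated reasoning does not go through.

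Finally, a small point: your direct argument for compactness of sublevel sets (via $E(u)\ge\frac{1}{4\pi}|u|_{H^1}^2$ and compact embedding $H^1_0\hookrightarrow H^{-1}$) is correct and more self-contained than the paper, which simply defers the ``good rate function'' claim to the abstract LDP machinery invoked in Theorem \ref{th:ldp}.
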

%

The fact that $I^*$ vanishes only for $h=0$  is important because this entails that the forthcoming LDP provides non trivial bounds as soon as the set $A$ has non empty interior and $0\not\in \overline{A}$.

\begin{theorem}\label{th:ldp} 
Assume that $\gamma\to 0,\mu\to\infty$ under the constraint \eqref{asymp}. 
Set $Y_\gamma=\gamma X-U$. The following LDP holds with good rate function $I^*$ on the space $H^{-1}(\U)$ equipped with the norm $|.|_{H^{-1}}$
$$-\inf_{h\in \mathring{A}}I^*(h)\leq  \liminf_{\gamma\to 0}\gamma^2\P_{\mu,\gamma}(Y_\gamma\in A)\leq   \limsup_{\gamma\to 0}\gamma^2\P_{\mu,\gamma}(Y_\gamma\in A)\leq -\inf_{h\in \overline{A}}I^*(h)$$ for each Borel subset $A$ of $H^{-1}(\U)$.
\end{theorem}

\subsection{ Proof of Theorem \ref{th:semi}}
 We first compute the limit of the partition function $Z_{\mu,\gamma}$. 
\begin{align*}
  \E\big[\exp\left(- \frac{4\pi\Lambda}{\gamma^2} \int_\U e^{\gamma X(x)} \, \dd x \right) \big]   &=\E\big[\exp\left( - \frac{4\pi\Lambda}{\gamma^2} \int_\U :e^{\gamma X(x)}:C(x,\U)^{\frac{\gamma^2}{2}}\, \dd x \right)\big] .
\end{align*}
We define $Y=Y_\gamma$ as follows
\begin{equation}\label{defy}
Y(x)=X- \frac{U}{\gamma}
\end{equation}
Where $U$ is the solution of \eqref{eq:class}. Note that this implies in particular that
\begin{equation}\label{alternative}
 U(x)= -4\pi \Lambda \int_\U e^{U(y)} G_\U(x,y)  \dd y.
 \end{equation}
We have 
\begin{equation}\label{vampetta}\begin{split}
 & \E\left[\exp\left(- \frac{4\pi\Lambda}{\gamma^2} \int_\U e^{\gamma X(x)} \, \dd x \right)\right]  \\
  &=\E\left[\exp\left(-\frac{4\pi\Lambda}{\gamma^2} \int_{\U} e^{U(x)}(1+\gamma Y(x)) \, \dd x \right) 
  \exp\left( - \frac{4\pi\Lambda}{\gamma^2} \int_\U e^{\gamma X(x)}-e^{U(x)}(1+\gamma Y(x)) \, \dd x \right)\right]  \\
 &= \exp\left(-  \frac{4\pi\Lambda}{\gamma^2} \int_\U e^{ U(x)}(1-U(x)) 
 \dd x+ \frac{8\pi^2\Lambda^2}{ \gamma^2}\int_{\U^2}  e^{ U(x)+U(y)}  G_\U(x,y)\dd x \dd y \right)\\
 & \quad \quad \quad \quad \quad \times
 \E\bigg[ \exp\left(- \frac{4\pi\Lambda}{\gamma } \int_\U  e^{ U(x)} X(x) \dd x- \frac{8\pi^2\Lambda^2}{ \gamma^2}   \int_{\U^2}  e^{ U(x)+U(y)}
 G_\U(x,y)\dd x \dd y\right)\\
 & 	\quad \quad \quad \quad \quad \quad \quad \quad  \quad \quad \quad \quad  \quad \quad \quad \quad \quad \quad 
 \exp\left(- \frac{4\pi\Lambda}{\gamma^2} \int_\U e^{\gamma X(x)}-e^{U(x)}(1+\gamma Y(x)) \, \dd x \right)\bigg].
\end{split}
\end{equation}
The first exponential term in the expectation 
$$e^{- \frac{4\pi\Lambda}{\gamma } \int_\U  e^{ U(x)} X(x) \dd x- \frac{8\pi^2\Lambda^2}{ \gamma^2}   \int_{\U^2}  e^{ U(x)+U(y)}  G_\U(x,y)\dd x \dd y} 
 $$
is a Cameron-Martin transform term. It has the effect of shifting the field $X$ by a function which is equal to 
\begin{equation}
-\frac{4\pi\Lambda}{\gamma }\int_\U  e^{ U(x)} G_\U(x,y)\dd x=\frac{U(x)}{\lambda}.
\end{equation}
Hence after this shift, $Y$ becomes a centered field, and the expectation in the last line of \eqref{vampetta} is equal to 
\begin{align*}
\E\Big[   \exp\left(- \frac{4\pi\Lambda}{\gamma^2} \int_\U e^{ U(x)} (e^{\gamma X(x)}-1- \gamma X(x) )\dd x \right)\Big] .
\end{align*} 
For the term in front of the expectation, from \eqref{alternative} we have the following simplification
\begin{align*}
8\pi^2 \Lambda ^2\int_{\U^2}  e^{ U(x)+U(y)}  G_\U(x,y)\dd x \dd y=&-2\pi\Lambda \int_{\U}  e^{U(x)} U(x)\dd x=-\frac{1}{4\pi}\int_{\U} U(x) \triangle U(x)  \dd x\\=&\frac{1}{4\pi}\int_{\U} |\partial U(x)|^2   \dd x,
\end{align*}
and thus it is equal to  
\begin{equation}
\exp\left(-  \frac{ 1}{4\pi \gamma^2} \int_\U ( |\partial U(x)|^2+16\pi^2\Lambda e^{ U(x)}) \dd x\right).
\end{equation}

The computation of the partition function (item 1. and item 2.) is  completed with the following lemma.
\begin{lemma}\label{lem:neymar}
The quantity $Z_\alpha=\E[e^{-\alpha\int_\U  e^{U(x)}  : X(x)^2 : \dd x   }  ]$ is finite for any $\alpha>0$ and under the probability measure
$$\tP=Z_\alpha^{-1}e^{-\alpha\int_\U  e^{U(x)}  : X(x)^2 : \dd x   }  $$ the field $X$ has the law of a Massive Free Field in the metric $g=e^{U(x)}\dd x^2$ with Dirichlet boundary condition and mass $4\pi\alpha$ , that is a Gaussian field with covariance kernel given by the Green function of the operator $2\pi (4\pi \alpha-\triangle_g)^{-1}$.
\end{lemma}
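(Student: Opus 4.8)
The plan is to identify the tilted law $\tP$ by showing its Laplace functional coincides with that of the announced Massive Free Field. Concretely, I would fix a test function $h \in C^\infty_c(\U)$ and compute
\[
\tP\text{-Laplace}(h) = \frac{\E\left[ e^{X(h)} \, e^{-\alpha\int_\U e^{U(x)} :X(x)^2: \dd x} \right]}{\E\left[ e^{-\alpha\int_\U e^{U(x)} :X(x)^2: \dd x} \right]},
\]
and check that this equals $\exp\left( \tfrac12 \int_{\U^2} h(x) G^{g}_{m}(x,y) h(y)\,\dd x\,\dd y \right)$ with $G^g_m$ the Green function of $2\pi(4\pi\alpha - \triangle_g)^{-1}$ and Dirichlet boundary conditions. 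Since a mean-zero Gaussian field is characterized by this functional, that identification proves the claim.

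The key technical step is to make sense of and manipulate the exponential-of-a-quadratic tilt. I would work with the cut-off fields $X_\epsilon$ of Section \ref{sec:cut}, replacing $:X(x)^2:$ by $X_\epsilon(x)^2 - \E[X_\epsilon(x)^2]$, so that the tilting functional becomes a genuine quadratic form in a Gaussian vector, and the ratio above is a finite-dimensional (or, in the OBE case, a convergent infinite-dimensional) Gaussian integral computable by completing the square. The effect of the tilt $e^{-\alpha \langle X_\epsilon, \mathcal{M}_\epsilon X_\epsilon\rangle}$, where $\mathcal M_\epsilon$ is multiplication by $e^{U}$ smeared at scale $\epsilon$, is to change the covariance operator of $X$ from $C = (-\triangle/2\pi)^{-1}$ (the GFF covariance, i.e.\ $G_\U$ up to the $2\pi$ normalization) to $(C^{-1} + 2\alpha \mathcal M_\epsilon)^{-1} = (-\triangle/2\pi + 2\alpha e^{U})^{-1}$ in the limit. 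Rewriting $-\triangle/2\pi + 2\alpha e^U = \tfrac{1}{2\pi} e^U ( -\triangle_g + 4\pi\alpha )$ using $\triangle_g = e^{-U}\triangle$ and $\lambda_g(\dd x) = e^{U}\dd x$, one recognizes exactly the operator $\tfrac{1}{2\pi}(4\pi\alpha - \triangle_g)$ acting in $\mathbb L^2(\lambda_g)$, whose inverse is the stated Green function. The normalizing constant $Z_\alpha$ cancels precisely the $h$-independent Gaussian determinant factor, so only the quadratic form in $h$ survives.

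The main obstacle is the passage to the limit $\epsilon \to 0$: one must show that $\E[e^{X_\epsilon(h)} e^{-\alpha\int e^{U} :X_\epsilon^2:}]$ converges to the object built from $X$ (this uses that $\int e^{U}:X_\epsilon^2:$ converges in $\mathbb L_2$, hence in law, together with a uniform integrability argument to upgrade convergence in law of the integrand to convergence of expectations — one can exploit that $-\alpha\int e^U :X_\epsilon^2: \le \alpha \E[\int e^U X_\epsilon^2] \le C|\log\epsilon|$ is not bounded above, so a more careful estimate, e.g.\ a Gaussian hypercontractivity or explicit variance computation bounding exponential moments of the quadratic form uniformly in $\epsilon$ for $\alpha$ small, is needed; for general $\alpha>0$ one invokes that the limiting quadratic form $\int e^U :X^2:$ has exponential moments of all orders since its "eigenvalues" are those of the compact operator $C^{1/2}\mathcal M C^{1/2}$, which are summable). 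A clean alternative avoiding these estimates is to verify the claim first for $\alpha$ in a neighborhood of $0$ by the finite-dimensional computation and a dominated-convergence argument, and then extend to all $\alpha>0$ by analyticity of both sides in $\alpha$ (both the tilted Laplace functional and the Green function $G^g_{4\pi\alpha}$ depend analytically on $\alpha$ on a complex neighborhood, by resolvent analyticity and the summability of the relevant spectrum). I would present the finite-$\epsilon$ completion-of-the-square computation in detail and then carry out whichever limiting argument is shortest, most likely the analytic continuation route.
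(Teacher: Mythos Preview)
Your approach is correct and will work, but it differs from the paper's in a way worth noting. You propose to complete the square at the operator level using a generic cut-off $X_\gep$ and then wrestle with the $\gep\to 0$ limit via uniform integrability or analytic continuation in $\alpha$. The paper instead chooses a \emph{specific} orthonormal basis expansion: the eigenfunctions $(e_j)_j$ of $-(2\pi)^{-1}\triangle_{\hat g}$ (with $\hat g=e^U\dd x^2$) normalized in $\mathbb L^2(\lambda_{\hat g})$, so that $X=\sum_j \lambda_j^{-1/2}e_j\,\gep_j$ with i.i.d.\ standard Gaussians $\gep_j$. In that basis the tilting functional decouples completely,
\[
\int_\U :X(x)^2:\,e^{U(x)}\dd x=\sum_{j\ge 1}\frac{\gep_j^2-1}{\lambda_j},
\]
and both $Z_\alpha=\prod_j \sqrt{\lambda_j/(\lambda_j+2\alpha)}\,e^{\alpha/\lambda_j}$ and the tilted covariance $\tE[X(x)X(y)]=\sum_j e_j(x)e_j(y)/(\lambda_j+2\alpha)$ are read off from one-dimensional Gaussian integrals; the last expression is by definition the Green function of $2\pi(4\pi\alpha-\triangle_{\hat g})^{-1}$. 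What the paper's route buys is that the ``$\gep\to 0$'' step becomes the convergence of an explicit product/series governed by Weyl asymptotics $\lambda_j\sim Cj$, so no uniform integrability or analytic continuation argument is needed. Your operator-theoretic completion of the square is of course morally the same computation, and it has the advantage of being basis-free; but the diagonalizing basis is the shortcut you are looking for to bypass the limit issues you flagged.
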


\noindent {\it Proof of Lemma \ref{lem:neymar}.} Let $\hat{g}$ be the metric tensor $e^{U(x)}\,\dd x^2$.  Let $(\lambda_j)_j$ be the non-decreasing sequence of eigenvalues of $-(2\pi)^{-1} \Delta_{\hat{g}}$ with Dirichlet boundary conditions (with repetition if necessary to take into account multiple eigenvalue). Let $(e_j)_j$ be an orthogonal sequence of eigenvectors
associated to $\lambda_j$ normalized to $1$ in the $\mathbb{L}^2(\U,\lambda_{\hat{g}})$ sense, i.e. $\int_\U e_j(x)^2 \lambda_{\hat{g}}(\dd x)=1$. 
Note that the sequence $(e_j)_j$ is orthogonal in $\mathbb{L}^2(\U,\lambda_{\hat{g}})$  and in the Sobolev space $H^1_0(\U)$ (see \cite[chapter 7]{buser}).
Recall that $\lambda_j \sim   C j$ as $j$ goes to infinity according to 
 Weyl's asymptotic formula \cite{Weyl1,Weyl2}.  Then we have
\begin{equation*}
X(x)= \sum_{j=1}^\infty \frac{e_j(x)}{\sqrt{\lambda_j}} \epsilon_j
\end{equation*}
where $(\epsilon_j)_j$ is an i.i.d. sequence of standard Gaussian variables given by $\epsilon_j=\langle X,e_j\rangle_{H^1}$. In this case, we have
\begin{align*}
\int_\U :X(x)^2:e^{U(x)}\dd x & = \int_\U \Big(  (\sum_{j=1}^\infty \frac{e_j(x)}{\sqrt{\lambda_j}} \epsilon_j)^2 -\sum_{j=1}^\infty \frac{e_j(x)^2}{\lambda_j}\Big)  e^{U(x)}\dd x   
   = \sum_{j=1}^\infty  \frac{\epsilon_j^2-1}{\lambda_j}  .
\end{align*}  
Therefore we have
\begin{align*}
Z_\alpha & = \E[   e^{-\alpha   \int_\U :X(x)^2:e^{U(x)}\dd x }  ]   =  \prod_j  \E [   e^{-\frac{\alpha}{\lambda_j}\epsilon_j^2 } ]  e^{ \frac{\alpha}{\lambda_j}}   =   \prod_j  \sqrt{\frac{\lambda_j}{\lambda_j+2\alpha}} e^{ \frac{\alpha}{\lambda_j}},
\end{align*}
which converges if $\alpha>- \frac{\lambda_1}{2}$. By computing the Laplace transform of the field $X$ under $\tP$, it is plain to see that the field is Gaussian. It remains to identify  the   covariance structure
\begin{align*}
\tE[X(x)X(y)]& =\E[   X(x) X(y) e^{-\alpha   \int_D :X(x)^2: e^{U(x)}\dd x }  ] /  Z_\alpha\\
& = \sum_{j=1}^\infty  \frac{e_j(x) e_j(y)}{\lambda_j}  \frac{\E[  \epsilon_j^2    e^{-\alpha  \sum_{k=1}^\infty  \frac{\epsilon_k^2-1}{\lambda_k} }   ]  }{Z_\alpha}  \\
& = \sum_{j=1}^\infty  \frac{e_j(x) e_j(y)}{\lambda_j} \sqrt{\frac{\lambda_j+2\alpha}{\lambda_j}} e^{-\frac{\alpha}{\lambda_j}} \E[  \epsilon_j^2    e^{-\alpha   \frac{\epsilon_j^2-1}{\lambda_j} }   ]    \\
& = \sum_{j=1}^\infty  \frac{e_j(x) e_j(y)}{\lambda_j+2 \alpha}.
\end{align*}
Hence the law of $X$ is that of the Massive Free Field in the metric $e^{U(x)}\dd x^2$ conditioned to be $0$ on the boundary of $\U$ with mass $2\alpha\times 2\pi$. \qed

\begin{lemma}\label{waddle}
For any bounded positive function $g$ on $\U$ one has
\begin{multline}\label{weah}
\lim_{\gamma\to 0} \E\Big[   \exp\left(-\frac{1}{\gamma^2} \int_\U g(x) (e^{\gamma X(x)} -1- \gamma X(x)) \dd x \right)\Big] 
\\ = e^{-\frac{1}{2}\int_{\U} \log C(x,\U)\dd x}\E[  \exp\left(-\int_\U  g(x)  : X(x)^2 : \dd x   \right)  ],
\end{multline}  where $:X^2:$ is the standard Wick-ordered square field defined in Section \ref{wick}.
\end{lemma}

\begin{rem}
Before to the proof of this lemma, let us make a few comments. First, observe  that the expression in the exponential is not positive as the elementary inequality $e^u-1-u\geq 0$ might suggest.  Indeed one should not forget that here $e^{\gamma X}\dd x$ is defined via a renormalization procedure (recall \eqref{mido}). 
This being clear, let us shortly explain why \eqref{weah} holds. We have
\begin{equation}\begin{split}
& \exp\left(-\frac{1}{\gamma^2} \int_\U g(x) (e^{\gamma X(x)} -1- \gamma X(x)) \dd x \right)\\
 &\quad=  \exp\left( \frac {1}{\gamma^2} \int_\U g(x) :e^{\gamma X(x)}: (1- \left(C(x,\U)\right)^{\gamma^2/2})\dd x\right) \\
&\quad\quad\quad\quad \times \exp\left(- \frac{1}{\gamma^2} \int_\U g(x) (:e^{\gamma X(x)}: -1- \gamma X(x)) \dd x \right).
\end{split}\end{equation}
When $\gamma$ tends to zero  $(1- \left(C(x,\U)\right)^{\gamma^2/2})$ is equivalent to $-\gamma^2/2 \log C(x,\U)$.
According to the expansion \eqref{eqdefconvergence}, it also makes sense to say that $:e^{\gamma X(x)}:\dd x \sim \dd x  $ in some sense as $\gamma$ goes to $0$ so that
the first  term should converge to $e^{-\frac{1}{2}\int_{\U} g(x) \log C(x,\U)\dd x}$.

\medskip

As for the second term \eqref{eqdefconvergence} tells us that 
\begin{equation}
\gamma^{-2} g(x) (:e^{\gamma X(x)}: -1- \gamma X(x)) \dd x \sim \frac{g(x)}{2}:X^2(x): \dd x
\end{equation}
which indicates convergence.

\medskip

The difficult part is then to show that the formal equivalent above are rigorous in a sense, and also that lower order terms do not change the behavior of the Laplace transform on the left-hand side of \eqref{weah}. 
\end{rem}

\noindent {\it Proof of Lemma \ref{waddle}.}  The first step is to prove that the random variable
$$H_\gamma=\frac{1}{\gamma^2} \int_\U (e^{\gamma X(x)} -1- \gamma X(x)-\frac{\gamma^2}{2}\ln C(x,\U)-\frac{\gamma^2}{2}  : X(x)^2 : )  g(x) \dd x$$
converges in $\mathbb L_2$ towards $0 $ as $\gamma\to 0$. We have

\begin{align}\label{eq:mbappe}
H_\gamma & =\frac{1}{\gamma^2} \int_\U :e^{\gamma X(x)}:\left( C(x,\U)^{\gamma^2/2}-1\right)- \frac{\gamma^2}{2}\ln C(x,\U)\dd x\\ 
&\quad \quad {\hub +} \frac{1}{\gamma^2} \int_\U (:e^{\gamma X(x)}: -1- \gamma X(x)-\frac{\gamma^2}{2}  : X(x)^2 : )  g(x) \dd x
\end{align} 
It is rather straightforward to check the convergence to zero in $\mathbb L_2$ of the first term.
As for the term of the second line, by the expansion \eqref{eqdefconvergence} and the orthogonality of Wick polynomials \eqref{scalar}, its variance is equal to

\begin{equation}
\sum_{n \ge 3}  \frac{1}{n!}\gamma^{2n-4} \int_{\U^2}   (G^n_\U(x,y))^n g(x)g(y)  \dd x \dd y.
\end{equation}
 The variance being of order $\gamma^{2}$ the second line of \eqref{eq:mbappe} is of order $\gamma$. 

 The second step is to prove a form of tightness.  Indeed if we can prove that for any positive bounded function $g$ one has  
\begin{equation}\label{zidane}
\sup_{\gamma>0} \E\Big[   e^{-\gamma^{-2}\int_\U 2g(x) (e^{\gamma X(x)} -1- \gamma X(x)-\frac{\gamma^2}{2}\ln C(x,\U)) \dd x }\Big] <+\infty. 
\end{equation}
 then the first step implies the following convergence in probability
$$e^{-\gamma^{-2}\int_\U g(x) \left((e^{\gamma X(x)} -1- \gamma X(x)-\frac{1}{2}\ln C(x,\U) \right)  \dd x}\to   e^{-\int_\U g(x) : X(x)^2 : \dd x}$$ 
and \eqref{zidane} implies that the sequence is bounded in $\cL^2$ and hence tight in $\cL^1$.
Thus the convergence in probability implies convergence in $\cL^1$ and thus convergence of the expectation which concludes the proof of Lemma \ref{waddle}.

\medskip

So we just have to prove  \eqref{zidane}. For notational simplicity we assume in the proof that $g \equiv 1/2$ and that $\int_\U \dd x=1$ 
but the proof with general $g$ (and in particular $g=e^{U(x)}$) works just the same. As the quantity are continuous for $\gamma \in (0,1]$, we can assume 
that $\gamma$ is small.

Our strategy is to introduce first the white noise cutoff $(X_\gep)_\gep$  for the field $X$. 

\vspace{1mm}

{\bf Introducing the cutoff.} Recall that $:e^{\gamma X_\gep(x)}:$ stands for $e^{\gamma X_\gep(x)-\frac{\gamma^2}{2}\E[X_\gep(x)^2]}$. Then as $e^x-x-1$ is a positive function, we have
\begin{equation}\label{iniesta}
-\gamma^{-2} \left(C(x,\U)^{\frac{\gamma^2}{2}}:e^{\gamma X_\gep(x)}:- 1 -\gamma X_\gep(x)-\frac{\gamma^2}{2}\ln C(x,\U)\right)\le \frac{1}{2}\E\left[X^2_{\gep}(x)\right]\leq\frac{1}{2}|\log \gep|.
\end{equation}
For the rest of the proof, we use the notation

\begin{equation}\label{kabadiawara}
R_\gep(x):= C(x,\U)^{\frac{\gamma^2}{2}} :e^{\gamma X_\gep(x)}:- 1 -\gamma X_\gep(x)-\frac{\gamma^2}{2}\ln C(x,\U)-\frac{\gamma^2}{2} :X^2_\gep(x):
\end{equation}

We want to find a good bound for $\int_\U | R_\gep(x)| \dd x$ which holds with a large probability and use
\eqref{iniesta} to bound the exponential when the bound on $\int_\U | R_\gep(x) | \dd x$ is not satisfactory. We set $\gep:=e^{-\gamma^{-1/8}}$ and hence $|\log \gep| = \gamma^{-1/8}$. We stress that this relation will hold during the rest of the proof of lemma \ref{zidane}.

We have for any $x$
\begin{equation}\label{xavi}
\E \left[|  R_\gep(x)| \ind_{\{|X_{\gep}(x)|>|\log \gep|^2\}}\right]
\le e^{-c | \log \gep |^3}(1+|\ln C(x,\U)|).
\end{equation}
This inequality can be established without being subtle: use the triangle inequality to decompose $| R_\gep(x)|$ and then estimate each term with standard Gaussian computations.

Separating the space into the two events $\{|X_{\gep}(x)|>|\log \gep|^2\} $ and $\{|X_{\gep}(x)|\leq |\log \gep|^2\}$ and using the inequality    $e^u-1-u-u^2/2 \geq  u^3/6$ for $u\in\R$ on the second event, we deduce
\begin{equation}\label{ribery}
R_\gep(x)
\ge   R_\gep(x)   \ind_{\{|X_{\gep}(x)|>|\log \gep|^2\}}-C \gamma^{9/4}(1+|\ln C(x,\U)|^3).
\end{equation}
Let us set
$$\cA:=\left\{ \left(\int_{\U}      | R_\gep(x) |   \ind_{\{|X_{\gep}(x)|>|\log \gep|^2\}}\,\dd x\right)  \ge \gamma^3\right\}.$$

We can integrate the   inequality \eqref{ribery} with respect to the Lebesgue measure over $\U$. To bound the integrated first term in the right-hand side of \eqref{ribery}, we can use \eqref{xavi} and the Markov inequality to obtain
\begin{equation}\label{benzema}
\P\left[ \cA \right]\le 
e^{-c | \log \gep |^3}\gamma^{-3}.
\end{equation}
We can finally conclude, using \eqref{iniesta} and \eqref{ribery}, that
\begin{multline}
\mathbb E\left[ \exp\left(-\frac{1}{\gamma^{2}}\int_\U (C(x,\U)^{\frac{\gamma^2}{2}}:e^{\gamma X_\gep(x)}:- 1 -\gamma X_\gep(x)-\frac{\gamma^2}{2}\ln C(x,\U))\,\dd x\right) \right]
\\ \le e^{C\gamma^{1/4}}\E\left[e^{-\frac 1 2\int_{\U} : X^2_\gep:} \ind_{\cA^c}\right] +\P\left[\cA\right] e^{\frac{1}{2}|\log \gep|},
\end{multline}
which can be bounded above by a constant (independent of $\gamma$) thanks to  the bound $$\sup_{\gep>0}\E\left[e^{-\frac {1} {2} \int_{\U} : X^2_\gep:}\right]<+\infty$$ and \eqref{benzema}.
        
\vspace{2mm}

{\bf Removing the cutoff.} The first observation is that we have the following estimate for any event $\mathcal B$ by using the Cauchy-Schwarz inequality (and the fact that the exponential is positive and $C(x,\U)\leq 1$)
\begin{equation}\label{zlatan}
\mathbb E\left[ e^{-\frac{1}{\gamma^{2}} \int_\U (e^{\gamma X(x)}- 1 -\gamma X(x)-\frac{\gamma^2}{2}\ln C(x,\U))\,\dd x}\ind_{\cB} \right]\le 
e^{\gamma^{-2}}\mathbb E\left[ e^{\frac{1}{\gamma}\int_\U X(x)\,\dd x}\ind_{\cB} \right]\le e^{C\gamma^{-2}}\sqrt{\P\left[\cB\right]},
\end{equation}
where $C$ is a positive constant.
Now we set $M=\int_\U :\!e^{\gamma X(x)}\!\!:\dd x$, $M_\gep= \int_\U C(x,\U))^{\gamma^2/2} :\!e^{\gamma X_\gep(x)}\!\!:\dd x$. On the event $\cB^c$, we can write
\begin{align}\label{cavani}
\mathbb E\Big[  &e^{-\frac{1}{\gamma^{2}} \int_\U (e^{\gamma X(x)}- 1 -\gamma X(x)-\frac{\gamma^2}{2}\ln C(x,\U))\,\dd x}\ind_{\cB^c}\Big]\nonumber\\
= &\mathbb E\left[ e^{-\frac{1}{\gamma^{2}} \int_\U (C(x,\U))^{\gamma^2/2}:e^{\gamma X_\gep(x)}:- 1 -\gamma X_\gep(x)-\frac{\gamma^2}{2}\ln C(x,\U))\,\dd x}e^{\frac{1}{\gamma}\int_\U  (X-X_\gep)(x)\,\dd x}e^{\frac{1}{\gamma^{2}}(M_\gep-M)}\ind_{\cB^c} \right].
\end{align}
It is therefore relevant to consider an event $\cB$ such that we can properly estimate the last two exponential terms. A reasonable choice is to set
$$\cB:=\Big\{| \int_\U(X-X_\gep)(x)\dd x | \ge \gamma^2\Big\} \cup \Big\{(M_\gep-M)\ge \gamma^3\Big\}.$$ 
On the event $\cB^c$, \eqref{cavani} allows us to compare (with constants) the desired quantity with the cutoff version.

To   use \eqref{zlatan} on the event $\cB$, what remains to do is to prove that 
\begin{equation}
\P(\cB)\le e^{-3C\gamma^{-2}}  .
\end{equation}

The quantity $\int_\U(X-X_\gep)(x)\dd x$ is a Gaussian random variable whose variance is of order $\gep$.
Hence there exists a constant $c$ such that
\begin{equation}\label{llacer}
\P \left[ \int_\U(X-X_\gep)(x)\dd x\ge  \gamma^2 \right] \le \exp(-c \gamma^4 \gep^{-1})
\end{equation}
To evaluate the likeliness of a deviation of $M_\gep-M$, we are going to compute the exponential moment of this variable with respect to $\E^\gep:=\E\left[  \cdot | \cF_\gep\right]$ 
where $\mathcal{F}_\gep$ is the sigma-algebra generated by the random variables $\{X_{u}(x);\gep\leq u,x\in\U\}$.
For $t>0$ let us consider the function
\begin{equation}
\phi(t)=\E\left[ e^{t(M_\gep-M)} \ | \ \cF_\gep \right]<\infty.
\end{equation}
Note that neither the full expectation with respect to $\P$ nor the  expectation for negative $t$ are finite.
We have
\begin{equation}\label{fiprim}
\phi'(t)= \E^\gep\left[ (M_\gep-M) e^{t(M_\gep-M)}  \right]\\
=\int_\U C(x,\U)^{\gamma^2/2} :e^{\gamma X_{\gep}(x)}:\E^\gep\left[  e^{t(M_\gep-M)}-e^{t(M_\gep-\tilde M^x)} \right] \dd x
\end{equation}
where 
\begin{equation}\label{tildem}
\tilde M^x:=\int_\U e^{\gamma X(y)} e^{\gamma^2 \bar G_\gep(x,y)} \dd y,
\end{equation}
and $\bar G_\gep(x,y)>0$ is the correlation function of $X-X_\gep$.
Note that for any $x\in \U$
$$e^{t(M-\tilde M^x)}=\exp\left( -t \int_\U (e^{\gamma^2 \bar G_\gep(x,y)}-1)e^{\gamma X(y)}\dd y \right) $$ and $e^{-tM}$ are decreasing functions of the field $X-X_\gep$.
Hence making use of the FKG inequality for white noise (see \cite[section 2.2]{grimmett} for the case of countable product and note that by expression \eqref{GFFXY} the field $X-X_\epsilon$ is an increasing function of the white noise) for the field $X-X_\gep$ and the inequality $e^u\geq 1+u$ 
\begin{align*}
\E^\gep\left[ e^{-t\tilde M^x }\right]\ge &\E^\gep\left[ e^{t(M-\tilde M^x)} \right] \E^\gep \left[ e^{-tM} \right]\\
\ge &\E^\gep\left[ 1+ t(M-\tilde M^x)\right]\E^\gep \left[ e^{-tM} \right]\\
=&\Big(1- t \int_\U \left[ e^{\gamma^2 \bar G_\gep(x,y)}-1 \right] C(x,\U)^{\gamma^2/2} :e^{\gamma X_{\gep}(y)}:\dd y\Big)\E^\gep \left[ e^{-tM} \right].
\end{align*}
Combining this with \eqref{fiprim} and using that $C(x,\U))$ is uniformly bounded  and   $:e^{\gamma X_{\gep}(y)}: \le e^{\gamma X_{\gep}}$, we obtain that 
for $\gamma$ sufficiently small 
\begin{equation}
\phi'(t)\le 2  t \phi(t)\iint_{\U^2} \left[e^{\gamma^2 \bar G_\gep(x,y)}-1\right] e^{\gamma X_{\gep}(x)+X_\gep(y)} \dd x\dd y.
\end{equation}
With this we can conclude that 
\begin{equation}\label{cbeau}
\phi(t)\le e^{Z_\gep t^2},
\end{equation}
where
$$Z_\gep:=\iint_{\U^2} \left[e^{\gamma^2 \bar G_\gep(x,y)}-1\right]e^{\gamma X_{\gep}(x)+\gamma X_\gep(y)}  \dd x\dd y.$$

The last thing that we need is a good control on $Z_\gep$.
Note that one can find a constant $C$ such that for all $x\in \U$, for all $\gamma<1$, and $\gep$
\begin{equation}
 \int_{\U} \left[e^{\gamma^2 \bar G_\gep(x,y)}-1\right]\dd y\le  C \gamma^2 \gep.
\end{equation}
Hence using the inequality 
\begin{equation}
e^{\gamma X_{\gep}(x)+X_\gep(y)}\le \frac{1}{2}\left(e^{2\gamma X_{\gep}(x)}+e^{2\gamma X_\gep(y)}\right),
\end{equation}
and symmetries in the integration we obtain
\begin{equation}
Z_\gep\le \iint_{\U^2} \left[e^{\gamma^2 \bar G_\gep(x,y)}-1\right]  e^{2\gamma X_\gep(x)} \dd y\dd x
\le C \gamma^2 \gep  \int_\U e^{2\gamma X_\gep(x) } \dd x.
\end{equation}
Now we have
\begin{equation}
 \int_\U e^{2\gamma X_\gep(x)} \dd x= \gep^{-1/2}+ \int_\U e^{2\gamma X_\gep(x)} \ind_{\{\gamma X_\gep(x)\ge   | \log \gep |/4\}} \dd x
\end{equation}
and one can find $c>0$ such that
\begin{equation}
\E\left[ \int_\U e^{2\gamma X_\gep(x)} \ind_{\{\gamma X_\gep(x)\ge   | \log \gep |/4\}} \dd x\right] \le e^{-c\gamma^{-2} |\log \gep|}.
\end{equation}
Hence using the Markov property (changing the value of $c$  if needed) we have for $\gamma$ small enough

\begin{equation}
\P\left[Z_\gep \ge \sqrt{\gep} \right]\le  e^{-c\gamma^{-2} |\log \gep|}.
\end{equation}
Finally using \eqref{cbeau} with $t=\gep^{-1/4}$ we have that 

\begin{equation*}
\P\left[ (M_\gep-M)\ge \gamma^3 \ | \ Z_\gep \le \gep^{1/2}\right]\le \exp(-\gep^{-1/4}\gamma^3)e^{1/2}.
\end{equation*}
This entails that
\begin{equation*}
\P\left[ (M_\gep-M)\ge \gamma^3  \right]\le e^{-c\gamma^{-2} |\log \gep|}+  2 \exp(-\gep^{-1/4}\gamma^3)e^{1/2}.
\end{equation*}
which completes the proof since we have $\gep =e^{-\gamma^{-1/8}}$.\qed

\bigskip

Back to the proof of Theorem \ref{th:semi}, we  consider a continuous bounded function $F$ on the space  $H^{-1}(\U)$. The same computation as \eqref{vampetta} shows that
\begin{align}\label{Messi}
  \E\big[F(\gamma X)\exp\big(- \frac{4\pi\Lambda}{\gamma^2} \int_\U e^{\gamma X(x)} \, \dd x \big)\big]   &=\exp\left(-  \frac{ 1}{4\pi \gamma^2} \int_\U ( |\partial U(x)|^2+16\pi^2\Lambda e^{ U(x)}) \dd x\right) \\
  &\times\E\Big[ F(\gamma X +U)  \exp\left(- \frac{4\pi\Lambda}{\gamma^2} \int_\U e^{ U(x)} (e^{\gamma X(x)} -1- \gamma X(x)) \dd x \right)\Big].\nonumber
\end{align} 
As $\gamma X$ converges in probability towards $0$, it is plain to deduce from Lemma \ref{waddle} again that
the expectation in the above right-hand side behaves when $\gamma\to 0$ as follows
\begin{multline}\label{Messi2}
\lim_{\gamma\to 0}\E\Big[ F(\gamma X +U)  \exp\left(- \frac{4\pi\Lambda}{\gamma^2} \int_\U e^{ U(x)} (e^{\gamma X(x)} -1- \gamma X(x)) \dd x \right)\Big]
\\ = F(U) e^{- 2\pi\Lambda \int_{\U}e^{ U(x)}  \log C(x,\U) \dd x} \E[  \exp\big( -2\pi\Lambda\int_\U  e^{U(x)}  : X(x)^2 : \dd x   \big)   ].
\end{multline} 
This shows the convergence in law of the field $\gamma X$ towards $U$. Hence item 3. Item 4 can be proved in the same way.

Now we focus on item 5. We use again the notation $$Y=X-\gamma^{-1}U.$$  From \eqref{Messi}, we have
\begin{equation}
\lim_{\gamma\to 0}\E_{\mu,\gamma}[F(Y)]=Z^{-1}\E[F(X)\exp\big( -2\pi\Lambda\int_\U  e^{U(x)}  : X(x)^2 : \dd x \big)  \big]
\end{equation}
 where 
 
 \begin{equation}
 Z:=\E[\exp\big(-2\pi\Lambda\int_\U  e^{U(x)}  : X(x)^2 : \dd x \big) ].
 \end{equation} 
Using Lemma \ref{lem:neymar}, the proof of Theorem \ref{th:semi} is over.\qed

\vspace{2mm}

\subsection{Proof of the large deviation principle}

\subsubsection*{Proof of Proposition \ref{ratefunction}} The fact that $I^*$ is a good rate function will follow from Theorem \ref{th:ldp}. So we focus on establishing 
the expression of $I^*$ on $H^1_0(\U)$. Our strategy is to first establish the identity on a dense subset of $H^1_0(\U)$ and then use a bit of topology to extend it.

 For $h\in H^{-1}(\U)$, let us denote by $H$ the mapping
$$H:f\in H^1_0(\U)\mapsto h(f)-(\tf(\Lambda,f)-\tf(\Lambda)).$$
As it is G\^ateaux-differentiable (see Proposition \ref{th:liouville2}), we can compute the partial derivative evaluated at $f$ in the direction $v\in H^1_0(\U)$, call it $\partial_vH(f)$. To this purpose, let us introduce the solution $V$ of \eqref{boucherie} and the function
$g\in H^1_0(\U)$ solution of \eqref{vanbasten} with $h$ in \eqref{vanbasten} replaced by $v$.
From Proposition \ref{th:liouville2} and the definition of $\tf$.
\begin{equation}\begin{split}
\partial_vH(f)&=\lim_{t\to \infty} \frac{H(f+tv)-H(f)}{t}\\
=& h(v)+\frac{1}{4\pi}\int_\U \big(2\langle \partial V,\partial g\rangle +16 \pi^2\Lambda e^{V}g-4\pi f g \big)\dd x-\int_\U v(V -U )\,\dd x\\
=&h(v) -\int_\U v(V -U )\,\dd x.\label{string}
\end{split}\end{equation}
To get the last line, we have used the fact that $V$ is the solution of \eqref{boucherie} in such a way that the first integral in the first line vanishes. Let us define
\begin{equation}\label{defs}
\mathcal{S}=\{h\in H^1_0(\U); h=V-U;  V\text{ solution to \eqref{boucherie} for some function }f\in H^1_0(\U)\}.
\end{equation}
If $h\in \mathcal{S}$, we can choose $f$ such that $h=V-U$ where $V$ is a solution  to \eqref{boucherie}. Then, by \eqref{string}, we deduce that $f$ is a critical point of $H$, which is concave. Therefore $I^*(h)=H(f)$. Plugging the relation $h=V-U$ into the expression of $H(f)$, we get $I^*(h)=  E(V)-E(U)=E(U+h)-E(U)$. This provides the expression of $I^*$ on $\mathcal{S}$.
 
 \medskip
 
Now we show that $\mathcal{S}$ is  dense in $H^1_0(\U)$.
For this purpose we show that 
\begin{equation}\label{dqsfhjytrdfgbvnjuytfg}
\mathcal{S}=\{h\in H^{1}_0(\U) \  | \  \Delta h \in H^1_0(\U) \}.
\end{equation}
Indeed setting $V=U+h\in H^1_0(\U)$, we have
\begin{align*}
\triangle V=&\triangle U+\triangle h\\
=&8\pi^2 \Lambda e^V +(\triangle U+\triangle h-8\pi^2 \Lambda e^V)\\
=&8\pi^2 \Lambda e^V +(8\pi^2 \Lambda e^U+\triangle h-8\pi^2 \Lambda e^V).
\end{align*} 
Setting $f= (4\pi \Lambda e^V- 4\pi \Lambda e^U-(2\pi)^{-1}\triangle h)$, it remains to 
 to prove that $f$ belongs to $H^1_0(\U)$ if and only if $\Delta h$ does.
In both cases it suffices to prove that $e^U-e^V \in H^1_0(\U)$ but this is easy:
 because it vanishes on the boundary because $h$ does, and
  $\partial (e^U-e^V)= e^U \partial U  -e^V\partial V$ is square integrable.
For this last point, one can check that either $\Delta h\in H^1_0(\U)$ (easy) or $f\in H^1_0(\U)$ (using 
$\gD V= 8\pi^2 \Lambda e^V-2\pi f$) implies that $V$ is bounded.

\medskip

Now we establish the expression of $I^*$ on $H^1_0(\U)\setminus \mathcal S$ by density.
Note that  as a supremum of continuous linear functions, $I^*$ restricted to $H^1_0(\U)$ is weakly lower semi-continuous (for the $H^1_0(\U)$ norm). 
By continuity of $E(.)$ for the $H^1_0(\U)$ norm, approximating $h\in H^1_0$ by a 
sequence in $\mathcal S$ we deduce that 
\begin{equation}
\forall h\in H^1_0(\U), \quad I^*(h)\leq E(U+h)-E(U) .
\end{equation}
 Conversely, if $h\in H^1_0(\U)$, we can find a sequence $(h_n)_n$ in  $\mathcal{S}$ converging in $H^1_0(\U)$ towards $h$. For each $n$, let us consider $f_n\in H^1_0(\U)$ such that $h_n=V_n-U$ where $V_n$ is the solution to \eqref{boucherie} associated to $f_n$. From \eqref{boucherie}, one can  check that 
 $(f_n)_n$ is Cauchy and  strongly converges in $H^{-1}(\U)$. Then we get:
\begin{align*}
I^*(h)=&\sup_{f\in H^1_0(\U)} h(f)-\tf(\Lambda,f)+\tf(\Lambda)\\
\geq & h(f_n)-\tf(\Lambda,f_n)+\tf(\Lambda)\\
=& (h-h_n)(f_n)+h_n(f_n)-\tf(\Lambda,f_n)+\tf(\Lambda)\\
=& (h-h_n)(f_n)+E(U+h_n)-E(U).
\end{align*}
We conclude by observing that $(h-h_n)(f_n)\to 0$ and $E(U+h_n)-E(U)\to E(U+h)-E(U)$ as $n\to\infty$.

\medskip
To complete the proof, 
we show that $I^*(h)<+\infty$ implies $h\in H^1_0(\U)$. For each $\bar{f}\in H^1_0(\U)$, let us consider the associated solution to \eqref{boucherie} and define $\bar{h}=\bar{V}-U$. Repeating the above argument, we have for each $\bar{f}\in H^1_0(\U)$
\begin{equation}
I^*(h)\geq (h-\bar{h})(\bar{f})+E(U+\bar{h})-E(U).
\end{equation}
As $I^*(h)<+\infty$ and $E(U+\bar{h})-E(U)\geq 0$, we deduce that
\begin{equation}\label{zola}
\forall \bar{f}\in H^1_0(\U),\quad h(\bar{f})\leq C+\bar{h}(\bar{f}),
\end{equation}
for some constant $C>0$, which does not depend on $\bar{f}$. Let us further introduce a function $\bar{g}\in H^1_0(\U)$ such that $-2\pi \bar{f}=\triangle \bar{g}$. To establish that $h \in H^1_0(\U)$, it suffices to prove that the above right-hand side of \eqref{zola} is bounded uniformly when
$\|\bar{f}\|_{H^{-1}}\leq 1$ (or equivalently $\|\bar{g}\|_{H^{1}}\leq 1$). By integrating \eqref{boucherie} with respect to $\bar{V}$, we get
$$-\int_\U |\partial \bar{V}|^2\,\dd x=8\pi^2\Lambda \int e^{\bar{V}}\bar{V}\,\dd x+\int_\U \triangle \bar{g}\bar{V}\,\dd x,$$
which can be rewritten as
\begin{equation}
\int_\U |\partial \bar{V}|^2\,\dd x+8\pi^2\Lambda\int_\U (e^{\bar{V}}-1)\bar{V}\,\dd x=-8\pi^2\Lambda\int_\U \bar{V}\,\dd x +\int_\U \langle\partial \bar{g},\partial \bar{V}\rangle\,\dd x.
\end{equation}
Using the elementary inequality $\langle a,b\rangle \leq \frac{1}{2 c}|a|^2+\frac{c}{2  }|b|^2$ for the two terms  for a well chosen $c>0$ we can establish  that the right-hand side is less than $C'+\frac{1}{2}\int_\U |\partial \bar{V}|^2\,\dd x$, for some constant $C'$ that does not depend on $\bar{f}$. Observe that $(e^u-1)u\geq 0$ for all $u\in\R$ so we deduce that $\int_\U |\partial \bar{V}|^2\,\dd x$  is bounded uniformly on the set $\{ \bar{f}\in H^1_0(\U) \ | \ \|\bar{f}\|_{H^{-1}} \leq 1\}$ (and thus $\int_\U |\partial \bar{h}|^2\,\dd x$ too). Finally, 
we have 
$$ \bar{h}(\bar{f})= \int_\U \langle\partial \bar{g},\partial \bar{h}\rangle\,\dd x\leq |\bar{g}|_{H^1}|\bar{h}|_{H^1}$$
so that $ \bar{h}(\bar{f})$ is uniformly bounded on the set $\{ \bar{f}\in H^1_0(\U) \ | \ \|\bar{f}\|_{H^{-1}} \leq 1 \}$. This implies that $h\in H^1_0(\U)$.

Also, recall that $U$ is the unique minimum in $H^1_0(\U)$ of the functional $E$. Indeed, a function in $H^1_0(\U)$ is a minimum of this functional if and only if it is a weak solution to \eqref{eq:class}. Furthermore, the weak solution of \eqref{eq:class} is unique as we have proved that the field $\gamma X$ converges in law (and even in probability) towards $U$ as soon as we get a weak solution $U$ to this equation. The limit in law being unique, we get uniqueness for \eqref{eq:class}. In particular, if $h\in H^1_0(\U)$ and $h\not = 0$, we get $I^*(h)=E(U+h)-E(U)>0$. \qed

 
\medskip

\subsubsection*{ Proof of Theorem \ref{th:ldp}.} Assume that we can prove that the family $(Y_\gamma)_\gamma$ is exponentially tight and that for each function $f\in H^1_0(\U)$
\begin{equation}\label{expLDP}
\lim_{\gamma\to 0}\gamma^2\ln \E_{\mu,\gamma}\big[e^{ \frac{Y_\gamma(f)}{\gamma^2}}\big]=\tf(\Lambda,f)-\tf(\gL).
\end{equation}
The mapping $f \in H^1_0(\U)\mapsto \tf(\Lambda,f)-\tf(\gL)$ is G\^ateaux-differentiable as shown in \eqref{string} and weakly lower semi-continuous (even weakly continuous) from  Proposition \ref{th:liouville3}. Hence we can apply a standard result from the theory of Large deviation in functional spaces
\cite[Corollary 4.5.27]{dembo}, which entails  the proof of Theorem \ref{th:ldp}.

\medskip

So we focus on establishing \eqref{expLDP} first and then we will prove that the family $(Y_\gamma)_\gamma$ is exponentially tight.
As we already know the asymptotic behavior of the partition function, it is sufficient to compute the asymptotic behavior of 

$$Z_{\mu,\gamma}[e^{ \frac{Y_\gamma(f)}{\gamma^2}}\big]= \E\big[\exp\left( \frac{Y_\gamma(f)}{\gamma^2}- \frac{4\pi\Lambda}{\gamma^2} \int_\U e^{\gamma X(x)} \, \dd x \right) \big].$$
Let  
$V$ be the (deterministic) weak solution of \eqref{boucherie} and 
set 
\begin{equation}\label{defthetabis}
\theta(x):=  f(x) -4\pi\Lambda e^{V (x)}=-\frac{\Delta V(x)}{2\pi}.
\end{equation}
Note that it implies
\begin{equation}\label{christanval}
\int_\U  \theta(y) G_\U(x,y)\dd y=V(x)
\end{equation}
We define $H_\gamma$ to be a shifted version of the field $X$,
\begin{equation}
H_\gamma(x)=X-\frac{V}{\gamma}.
\end{equation}
We have
\begin{align}
 & \E\big[\exp\left( \frac{Y_\gamma(f)}{\gamma^2}- \frac{4\pi\Lambda}{\gamma^2} \int_\U e^{\gamma X(x)} \, \dd x \right) \big] \\
  &=\E\big[e^{\frac{1}{\gamma^2} \int_{\U}(\gamma X(x)-U(x))f(x)\dd x -\frac{4\pi\Lambda}{\gamma^2} \int_{\U} e^{V(x)}(1+\gamma H_\gamma(x)) \, \dd x }e^{- \frac{4\pi\Lambda}{\gamma^2} \int_\U e^{\gamma X(x)}-e^{V(x)}(1+\gamma H_\gamma(x)) \, \dd x }\big]\nonumber  \\
 &= e^{   \frac{1}{\gamma^2} \int_\U (4\pi\Lambda V(x)e^{V(x)}-4\pi\Lambda e^{V(x)}-U(x)f(x)) 
 \dd x+ \frac{1}{2 \gamma^2}   \iint_{\U^2}  \theta(x)\theta(y)  G_\U(x,y)\dd x \dd y }\nonumber\\
 &\times
 \E\big[ e^{\frac{1}{\gamma } \int_\U   X(x) \theta(x)\dd x- \frac{1}{2 \gamma^2}   \int_{\U^2}  \theta(x)\theta(y)  G_\U(x,y)\dd x \dd y} 
 e^{- \frac{4\pi\Lambda}{\gamma^2} \int_\U e^{\gamma X(x)}-e^{V(x)}(1+\gamma H_\gamma(x)) \, \dd x }\big] .\label{valbuenatackle}
\end{align}
Once again, the first exponential term in the expectation 
$$e^{\frac{1}{\gamma } \int_\U   X(x) \theta(x)\dd x- \frac{1}{2 \gamma^2}   \int_{\U^2}  \theta(x)\theta(y)  G_\U(x,y)\dd x \dd y} 
 $$
is a  Cameron-Martin transform term. It has the effect of shifting the field $X$ by an amount $\gamma^{-1}V$ (cf. \eqref{christanval}),
and hence after this shift, $H_\gamma$ becomes a centered field, and the expectation in the last line of \eqref{valbuenatackle} is equal to 
\begin{align*}
\E\Big[   \exp\left(- \frac{4\pi\Lambda}{\gamma^2} \int_\U e^{ V(x)} (e^{\gamma X(x)}-1- \gamma X(x) )\dd x \right)\Big] .
\end{align*} 
Concerning the exponential term in front of the expectation, it can be simplified. Let us briefly explain how. From \eqref{christanval} and \eqref{defthetabis}
\begin{equation}\label{laudrup}
 \iint_{\U^2}  \theta(x)\theta(y)G_\U(x,y)\dd x \dd y= \int_{\U}  \theta(x)V(x) \dd x=
 -\frac{1}{2\pi}\int_{\U} \Delta V(x) V(x) \dd x= \frac{1}{2\pi}\int_{\U}  |\partial V(x)|^2 \dd x.
\end{equation}
We also have 
\begin{equation}\label{toefting}
 \int_{\U} 4\pi \Lambda e^{V(x)} V(x) \dd x= \int_{\U}  (f(x)-\theta(x))V(x)=-\frac{1}{2\pi}\int_{\U}  |\partial V(x)|^2 \dd x+  \int_{\U} f(x)V(x).
\end{equation}
Using this in \eqref{valbuenatackle} we obtain 
\begin{equation}\begin{split}
 & \E\big[\exp\left( \frac{Y_\gamma(f)}{\gamma^2}- \frac{4\pi\Lambda}{\gamma^2} \int_\U e^{\gamma X(x)} \, \dd x \right) \big] \\
&\quad \quad =\exp\left(- \frac{ 1}{4\pi \gamma^2} \int_\U ( |\partial V(x)|^2+16\pi^2\Lambda e^{ V(x)}) +\frac{1}{\gamma^2}\int_\U f(x)( V(x)-U(x))\dd x\right)\\
&\quad \quad \quad \quad \times \E\Big[   \exp\left(- \frac{4\pi\Lambda}{\gamma^2} \int_\U e^{ V(x)} (e^{\gamma X(x)}-1- \gamma X(x) )\dd x \right)\Big].
\end{split}\end{equation}
To complete the proof of  \eqref{expLDP}: we use Lemma \ref{waddle} which asserts that  the last line converges as $\gamma\to 0$ towards 
$$e^{-  2\pi\Lambda  \int_\U e^{ V(x)}\ln C(x,\U) \dd x }\E[e^{-2\pi\Lambda\int_\U e^{V(x)}:X^2(x):\dd x}].$$

Now, we turn to the exponential tightness of the field $Y_\gamma=\gamma X-U$. The exponential tightness of the field $Y_\gamma=\gamma X-U$ is equivalent to the exponential tightness of $\gamma X$ 
(simply because if $K$ is a compact set of $H^{-1}(\U)$, 
$K+U$ is also compact).
We adopt the framework of section 4.2 in \cite{dubedat}. By conformal invariance, we work on the square $S=[0,1]^2$. In this case, given a sequence $(a_{j,k})_{j,k \geq 1}$, the series  
\begin{equation}\label{defserie}
f_n:=\sum_{1 \leq j,k \leq n}  a_{j,k} \sin(\pi j x) \sin( \pi j y )
\end{equation}
converges in $H^{-1}(S)$ if and only if $\sum_{j,k \geq 1}  \frac{|a_{j,k}|^2}{j^2+k^2}< \infty $. In this case the limit $f:= \lim_n f_n$ has the following norm
\begin{equation*}
|f|_{H^{-1}(S)}= \sum_{j,k \geq 1}  \frac{|a_{j,k}|^2}{(j^2+k^2)}.
\end{equation*}
In  $H^{-1}(S)$, the GFF is then the almost sure limit of the series \eqref{defserie} where $a_{j,k}= \frac{\epsilon_{j,k}}{\sqrt{j^2+k^2}}$
where $(\epsilon_{j,k})_{j,k \geq 1}$ is an i.i.d. sequence of standard Gaussian variables (in this case the $\epsilon_{j,k}$ are the $H^1(S)$
projections of $X$ on the $H^1(S)$ basis $((x,y) \rightarrow \sin(\pi j x) \sin( \pi j y ))_{j,k \geq 1}$). Let $C>0$ be fixed. We introduce the
following compact set of $H^{-1}(S)$ (we identify the limit of the series \eqref{defserie} with the sequence $(a_{j,k})_{j,k \geq 1}$) 
\begin{equation*}
K_C = \lbrace  (a_{j,k})_{j,k \geq 1}; \:    \forall j,k, \; |a_{j,k}| \leq \frac{C}{(j^2+k^2)^{1/4}}  \rbrace 
\end{equation*}
We have 
\begin{align*}
\P( \gamma X \notin K_C) &= \P(  \exists j,k, \; \gamma |\epsilon_{j,k}| > C (j^2+k^2)^{1/4}  ) \\
& \leq \sum_{j,k \geq 1}  \P( \gamma |\epsilon_{j,k}| > C (j^2+k^2)^{1/4}  )  \\
& \leq \sum_{j,k \geq 1}  e^{-\frac{C^2 ((j^2+k^2)^{1/2})}{2 \gamma^2} }. 
\end{align*}  
hence we get that 
\begin{equation*}
\underset{C \to \infty}{\lim}  \underset{\gamma \to 0}{\overline{\lim}} \gamma^2 \log \P( \gamma X \notin K_C) = - \infty.
\end{equation*}
This shows that $\gamma X$ is exponentially tight in $H^{-1}(S)$.

\qed


 
\section{Semiclassical limit of LFT with heavy matter insertions}\label{sec:heavy}
In this section, we want to treat the case of  heavy matter  operator insertions in the partition function. This roughly corresponds to tilting the partition function of LFT with exponential terms and we will see that, semiclassically, this creates conical singularities in a hyperbolic surface. We restrict once again to the flat unit disk for simplicity.

More precisely, we consider distinct $z_1,\dots,z_p\in\U$, $\cX_1,\dots \cX_p\in [0,2[$, a cosmological constant $\mu\geq 0$ and a Liouville conformal factor $\gamma\in]0,2]$.  We set 
$$Q=\frac{2}{\gamma}+\frac{\gamma}{2}.$$ 
We formally define the law $\P_{\mu,\gamma,(z_i,\cX_i)_i}$ of the Liouville field  $X$ on $\U$ with heavy matter insertions $(z_i,\cX_i)_i$ associated to $(\mu,\gamma)$ as the law of the GFF on $\U$ tilted by
\begin{equation}\label{formaldef}
\exp\Big(-4\pi \mu\int_{{\U}} \!e^{\gamma X(x)}  \dd x\Big)\prod_{i=1}^p e^{\frac{\cX_i}{\gamma}X(z_i)},
\end{equation}
Of course, the above expression is not a function (because of $e^{\frac{\cX_i}{\gamma}X(z_i)}$) 
and this cannot be considered as a Radon-Nykodym derivative, but on a formal level one can always consider this last term as a Cameron-Martin tilt.
The rigorous definition of $\P_{\mu,\gamma,(z_i,\cX_i)_i}$ is then given by its action on  bounded continuous functionals $F$ on  $H^{-1}(\U)$   
as follows
\begin{align*}
&\E_{\mu,\gamma,(z_i,\cX_i)_i}[F(X)]\\
&=Z^{-1}_{\mu,\gamma,(z_i,\cX_i)_i} \E\Big[F\big(X+\sum_i \frac{\cX_i}{\gamma}G_\U(\cdot,z_i)\big)\exp\Big(-4\pi \mu\int_{{\U}}e^{\gamma X(x)} e^{ \sum_i \cX_iG_\U(\cdot,z_i)}\dd x \Big) \Big]
\end{align*}
where  $\E$ stands for the expectation with respect to the free field $X$, and
\begin{equation}
 Z_{\mu,\gamma,(z_i,\cX_i)_i}= \E\Big[\exp\Big(-4\pi \mu\int_{{\U}} e^{\gamma X(x)}  e^{ \sum_i \cX_iG_\U(\cdot,z_i)}\dd x \Big) \Big].
\end{equation}

 The additional exponential terms in the above product are called heavy matter operators in the physics literature (see \cite{nakayama,witten} for instance). The problem is to compute the asymptotic behaviour of the partition function and to find the limit in law  under the probability law  $\P_{\mu,\gamma,(z_i,\cX_i)_i}$ of the field $\gamma X$ when
$\gamma^2\mu=\Lambda$ and $\gamma\to 0$.

%
%
%
We will see that the field concentrates on the solutions of the Liouville equation with sources (see Theorem \ref{th:liouvillesource})
\begin{equation}\label{eq:source}
\triangle U =8\pi^2\Lambda e^U- 2\pi \sum_i \cX_i\delta_{z_i}\quad \quad U_{|\partial\U}=0,
\end{equation}
 where $\delta_z$ stands for the Dirac mass at $z$.  Theorem \ref{th:liouvillesource} shows that if $U$ is the solution of equation \eqref{eq:source} then $U-\sum_{i}\cX_i G_\U(\cdot,z_i)$ is at least continuous. Therefore the metric $e^{U(x)}dx^2$ possesses singularities of the type $\frac{1}{|x-z_i|^{\cX_i}}$ at the points $z_i$ .

\begin{theorem}\label{th:source} 
Assume $\gamma\to 0$ while keeping fixed the quantity $\gamma^2\mu=\Lambda$. The field  $\gamma X$  concentrates on the solution of the  Liouville equation with sources \eqref{eq:source}.  More precisely
\begin{enumerate}
\item The partition function has the following asymptotic behavior at the exponential scale 
\begin{equation}
\lim_{\gamma\to 0} \gamma^2 \log Z_{\mu,\gamma,(z_i,\cX_i)_i}=  -    \frac{1}{4\pi\gamma^2} \int_\U(|\partial (U-H)(x)|^2+16\pi^2\Lambda e^{U(x)} )  \dd x=: \tf(\gL, (z_i,\cX_i)_i).
\end{equation}
where  $H(x)= \sum_i \cX_iG_\U(\cdot,z_i)$.
\item More precisely we have the following equivalent as $\gamma\to 0$
\begin{multline} \label{exactequivalentinsertion}
Z_{\mu,\gamma,(z_i,\cX_i)_i}\sim e^{\gamma^{-2} \tf(\gL,(z_i,\cX_i)_i)}
\exp\left(-  2\pi\Lambda  \int_\U e^{ U(x)}\ln C(x,\U) \dd x \right)\\  \times \E\left[ \exp\left(-2\pi\Lambda\int_\U  e^{U(x)}  : X(x)^2 : \dd x   \right)  \right ]. 
\end{multline}

\item The field $\gamma X$ converges in probability in  $H^{-1}(\U)$   as $\gamma\to 0$  towards $U$.
\item Both  random measures $:e^{\gamma X}: \dd x$ and $e^{\gamma X} \dd x$ converge in law in the sense of weak convergence of measures towards $e^{U(x)}\,±\dd x$ as $\gamma\to 0$.
\item  the field $X-\gamma^{-1}U$ converges in law in  $H^{-1}(\U)$   towards a Massive Free Field in the metric $\hat{g}=e^{U(x)}\dd x^2$ with Dirichlet boundary condition and mass $8\pi^2\Lambda$ , that is a Gaussian field with covariance kernel given by the Green function of the operator $2\pi (8\pi^2\Lambda-\triangle_{\hat{g}})^{-1}$ and Dirichlet boundary condition.
\end{enumerate} 
\end{theorem}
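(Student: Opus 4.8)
\noindent{\it Proof of Theorem \ref{th:source} (plan).} The plan is to run the proof of Theorem \ref{th:semi} essentially verbatim, the only genuinely new ingredient being that the conical weight $e^{H}$, $H:=\sum_i\chi_iG_\U(\cdot,z_i)$, must be carried through and, crucially, that it makes the relevant ``background weight'' unbounded. First I would invoke Theorem \ref{th:liouvillesource} to obtain the solution $U$ of \eqref{eq:source}, to record that $u:=U-H$ lies in $H^1_0(\U)$ and is continuous and bounded, that it solves $\triangle u=8\pi^2\Lambda e^{U}$ with $u_{|\partial\U}=0$, and that it admits the integral representation $u(x)=-4\pi\Lambda\int_\U e^{U(y)}G_\U(x,y)\,\dd y$ (equivalently $U(x)=H(x)-4\pi\Lambda\int_\U e^{U(y)}G_\U(x,y)\,\dd y$); in particular $e^{U}\asymp|x-z_i|^{-\chi_i}$ near $z_i$, so $e^{U}\in\mathbb L^1(\U)\cap\mathbb L^q(\U)$ for some $q>1$ precisely because $\chi_i<2$. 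Then, starting from $Z_{\mu,\gamma,(z_i,\chi_i)_i}=\E[\exp(-\tfrac{4\pi\Lambda}{\gamma^2}\int_\U e^{\gamma X(x)}e^{H(x)}\,\dd x)]$, I would decompose $X=Y+\gamma^{-1}u$ (rather than $\gamma^{-1}U$) and reproduce the chain \eqref{vampetta}--\eqref{Messi}, using $e^{\gamma X(x)}e^{H(x)}=e^{U(x)}e^{\gamma Y(x)}$ and the Wick rewriting $e^{\gamma X}\dd x=\,:\!e^{\gamma X}\!:C(x,\U)^{\gamma^2/2}\dd x$. The linear term produces a Girsanov factor $e^{-\frac{4\pi\Lambda}{\gamma}\int_\U e^{U}X}$ whose Cameron--Martin shift is exactly $\gamma^{-1}u$ by the integral representation; after it $Y$ becomes a genuine GFF and one is left with $\E[\exp(-\tfrac{4\pi\Lambda}{\gamma^2}\int_\U e^{U(x)}(e^{\gamma X(x)}-1-\gamma X(x))\,\dd x)]$. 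Using $\triangle u=8\pi^2\Lambda e^{U}$ and integrating by parts (licit since $u\in H^1_0(\U)$), the deterministic prefactor simplifies, as in Theorem \ref{th:semi}, to $\exp(-\tfrac{1}{4\pi\gamma^2}\int_\U(|\partial u|^2+16\pi^2\Lambda e^{U})\,\dd x)=e^{\gamma^{-2}\tf(\Lambda,(z_i,\chi_i)_i)}$, since $u=U-H$.

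Items 1 and 2 then follow once one knows that the residual expectation converges, and the value of the limit is given by Lemma \ref{waddle} with weight $g=4\pi\Lambda e^{U}$. For items 3 and 4 one inserts a bounded continuous functional $F$ (applied to $\gamma X+H$, to match the definition of $\P_{\mu,\gamma,(z_i,\chi_i)_i}$), repeats the computation to reach the analogue of \eqref{Messi2}, and concludes as in Theorem \ref{th:semi} that $\gamma X+H$, hence $\gamma X$, concentrates at $U$; note $U=H+u\in\mathbb L^2(\U)\subset H^{-1}(\U)$, so the statements make sense. Item 5 follows from Lemma \ref{lem:neymar} applied with $\alpha=2\pi\Lambda$ to the conformal metric $\hat g=e^{U}\dd x^2$; its proof carries over because this metric still has finite area ($\int_\U e^{U}<\infty$), the embedding $H^1_0(\U)\hookrightarrow\mathbb L^2(\U,\lambda_{\hat g})$ is still compact, the Dirichlet eigenfunctions of $-\Delta_{\hat g}$ still form an orthogonal basis of both $\mathbb L^2(\U,\lambda_{\hat g})$ and $H^1_0(\U)$, and Weyl's law $\lambda_j\sim Cj$ persists for metrics with finitely many conical singularities, so all the series manipulations there remain valid.

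The hard part, and the only place where new work is needed, is upgrading Lemma \ref{waddle} (and with it Lemma \ref{zidane}, and the exponential‐moment estimate behind Lemma \ref{lem:neymar}) from a bounded positive weight $g$ to the unbounded weight $g=4\pi\Lambda e^{U}$ with integrable power singularities $|x-z_i|^{-\chi_i}$, $\chi_i<2$. Concretely, in the $\mathbb L_2$ step of Lemma \ref{waddle} one must check $\sum_{n\ge3}\tfrac{\gamma^{2n-4}}{n!}\int_{\U^2}G_\U(x,y)^{n}g(x)g(y)\,\dd x\,\dd y=O(\gamma^2)$, which holds by Young's/Hölder's inequality exactly because $\chi_i<2$ forces $g\in\mathbb L^q$ for some $q>1$ while $G_\U^{\,3}$ is a locally integrable kernel. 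In Lemma \ref{zidane} one must rerun the cutoff estimates \eqref{xavi}--\eqref{benzema} and the FKG/$Z_\gep$ control with $g(x)\,\dd x$ in place of $\dd x$; only $g\in\mathbb L^1(\U)$ is used for the Gaussian tail bounds and the diagonal estimate of $\int_\U(e^{\gamma^2\bar G_\gep(x,y)}-1)g(y)\,\dd y$, while the term $\int_\U g(x)e^{2\gamma X_\gep(x)}\,\dd x$ requires only that a Gaussian multiplicative chaos with conical weight of exponent $<2$ has finite (small) moments for $\gamma$ small, again a consequence of $\chi_i<2$. Finally one needs $\sup_{\gep}\E[e^{-\frac12\int_\U g\,:X_\gep^2:}]<+\infty$, which, by the eigenfunction computation of Lemma \ref{lem:neymar}, reduces to the convergence of $\prod_j(\lambda_j/(\lambda_j+2\alpha))^{1/2}e^{\alpha/\lambda_j}$ for $\alpha>-\lambda_1/2$, guaranteed by $\lambda_j\sim Cj$. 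Once these three estimates are in place, the argument closes word for word as in the proof of Theorem \ref{th:semi}.\qed
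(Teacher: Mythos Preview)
Your plan is correct and matches the paper's own proof essentially step by step: the paper also sets $V:=U-H$ (your $u$), uses the integral representation $V(x)=-4\pi\Lambda\int_\U e^{U(y)}G_\U(x,y)\,\dd y$, performs the same Girsanov shift to center $Y=X-\gamma^{-1}V$, obtains the identical deterministic prefactor, and then reduces everything to extending Lemma \ref{waddle} (and Lemma \ref{zidane}) to the unbounded weight $g=4\pi\Lambda e^{U}$ with integrable conical singularities. The one minor methodological difference is in the justification of Lemma \ref{lem:neymar} for the singular metric: rather than invoking Weyl's law for cone metrics as you do, the paper constructs the eigenbasis directly by showing that $f\mapsto\int_\U G_\U(\cdot,y)f(y)e^{U(y)}\,\dd y$ is Hilbert--Schmidt on $\mathbb L^2(e^{U}\dd x)$ (using only $e^{U}\in\mathbb L^p$ for some $p>1$) and then checks by hand that the resulting $\mathbb L^2$ eigenbasis is also an $H^1_0$ basis; this yields $\sum_j\lambda_j^{-2}<\infty$, which is all that is needed for the product $\prod_j(\lambda_j/(\lambda_j+2\alpha))^{1/2}e^{\alpha/\lambda_j}$ to converge.
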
 

\subsection{The large deviation principle for LFT with insertions}

For $f\in H^1_0(\U)$, we consider the  weak solution $V$ of the perturbed Liouville equation (see Theorem \ref{th:liouville1})
\begin{equation}\label{bouchesource}
\triangle V=8\pi^2\Lambda e^{V(x)}-2\pi   f(x)- 2\pi \sum_i \cX_i\delta_{z_i}, \quad \text{with }V_{|\partial \U} =0,
\end{equation}
and we set  
$$\tf  (\Lambda,f)= - \frac{ 1}{4\pi  } \int_\U ( |\partial (V- H) |^2+16\pi^2\Lambda e^{ V(x)}) \,\dd x+ \int_\U(V-U-H)(x)f(x)\dd x,$$ 
 where  $U$ is the solution of the classical Liouville equation \eqref{eq:source}. The mapping $f \in H^1_0(\U)\mapsto \tf(\Lambda,f)-\tf(\gL)$ is still convex, G\^ateaux-differentiable and weakly lower semi-continuous.
 
 We define the Fenchel-Legendre transform $I^*$ of $\tf  (\Lambda,\cdot)-\tf  (\Lambda)$ as prescribed by \eqref{gignac}. We further define the set 
$$\mathcal{S}_{\text{source}}=\{h\in H^{-1}(\U); h=V-U-H;  V\text{ solution to \eqref{boucherie} for some function }f\in H^1_0(\U)\}.$$
The function $I^*$ is a good rate function, with $I^*(h)>0$ except for $h=0$.  For $h\in \mathcal{S}_{\text{source}}$, we have the following explicit expression 
\begin{equation*}
I^*(h)= E(V)-E(U)<+\infty, \quad  \text{if } h=V-U-H \text{ where } V \text{ solves } \eqref{bouchesource} \text{ for some }\in H^1_0(\U)  ,
\end{equation*} and 
$$\forall u\in H^1_0(\U)+H,\quad E(u)=  \frac{ 1}{4\pi  } \int_\U ( |\partial (u-H)(x)|^2+16\pi^2\Lambda e^{ u(x)})\dd x.$$
Finally, $\mathcal{S}_{\text{source}}+H$ is dense in $H^{-1}(\U)$ as it contains the set of $h$ such that $h,\triangle h\in H^1_0(\U)$.

\begin{theorem}\label{thldp:source} 
Assume that $\gamma\to 0,\mu\to\infty$ under the constraint \eqref{asymp}. 
Set $Y_\gamma=\gamma X-U$. The following LDP holds with good rate function $I^*$ on the space $H^{-1}(\U)$ equipped with the norm $|.|_{H^{-1}}$
$$-\inf_{h\in \mathring{A}}I^*(h)\leq  \gamma^2\liminf_{\gamma\to 0}\P_{\mu,\gamma,(z_i,\cX_i)_i}(Y_\gamma\in A)\leq  \gamma^2\limsup_{\gamma\to 0}\P_{\mu,\gamma,(z_i,\cX_i)_i}(Y_\gamma\in A)\leq -\inf_{h\in \overline{A}}I^*(h)$$ for each Borel subset $A$ of $H^{-1}(\U)$.
\end{theorem}

\subsection{Proofs} 
\noindent {\it Proof of Theorem \ref{th:source}.} We first compute the limit of the partition function $Z_{\mu,\gamma}$. 
\begin{align*}
Z_{\mu,\gamma,(z_i,\cX_i)_i}&=  \E\big[e^{- \frac{4\pi\Lambda}{\gamma^2} \int_\U e^{\gamma X(x)} e^{ \sum_i \cX_iG_\U(\cdot,z_i)}  \, \dd x }\big]    =\E\big[e^{- \frac{4\pi\Lambda}{\gamma^2} \int_\U :e^{\gamma X(x)}:e^{ \sum_i \cX_iG_\U(\cdot,z_i)}  C(x,\U)^{\frac{\gamma^2}{2}}\, \dd x }\big] .
\end{align*}
Let us set $V=U-H$ where $U$ is the solution of \eqref{eq:source}.
Note that $V$ satisfies
\begin{align}\label{platini}
V(x)= -4\pi \Lambda \int_\U e^{U(y)} G_\U(x,y)  \dd y.
\end{align}
Finally we set $Y=Y_\gamma=X-\gamma^{-1}V$.
 The computation as in \eqref{vampetta} yields
\begin{align*}
 &Z_{\mu,\gamma,(z_i,\cX_i)_i}=  e^{-  \frac{4\pi\Lambda}{\gamma^2} \int_\U e^{U(x)}(1-V(x))  \dd x} e^{ \frac{8\pi^2\Lambda^2}{ \gamma^2}   \iint_{\U^2}  e^{U(x)+U(y)}  G_\U(x,y)\dd x \dd y  }  \\
 & \quad  \times  \E\Big[ e^{- \frac{4\pi\Lambda}{\gamma} \int_\U e^{ U(x)} X(x) \dd x  - \frac{8\pi^2\Lambda^2}{ \gamma^2}   \iint_{\U^2}  e^{U(x)+U(y)} G_\U(x,y)\dd x \dd y  }  e^{- \frac{4\pi\Lambda}{\gamma^2} \int_\U e^{H(x)} e^{\gamma X(x)}-e^{U(x)} (1-\gamma Y(x)) \dd x }\Big] \\
 &\quad\quad\quad =  \exp\left( -  \frac{1}{4\pi\gamma^2} \int_\U(|\partial V(x)|^2+16\pi^2\Lambda e^{U(x)} )  \dd x \right) \E\Big[e^{
\int_\U e^{U(x)} (e^{\gamma X(x)}-1-\gamma X(x)) \dd x }\Big] .
\end{align*}
The last line is obtained by using \eqref{platini} to simplify the first term and by performing a Cameron-Martin transform in the expectation which by \eqref{platini} again has the property of shifting the field $X$ by an amount $\gamma^{-1}V$ and makes $Y$ centered.
The computation of the partition function as well as the other statements of Theorem \ref{th:source} are completed if one can show that Lemma \ref{waddle} also holds in the case when $U$ is the solution of \eqref{eq:source}.\qed

%
%
%
 
\vspace{2mm}
 
\begin{proof}[Proof of Lemma \ref{waddle} for $U(x)$ solution of \eqref{eq:source}]
What has to be done is to add a factor $e^{U(x)}$ in front of many terms and check that the proof still works.
We have to be a bit careful here because $e^{U(x)}$ is not bounded as it possesses singularities at the points where the mass is added.
However as these singularities are integrable this causes no major problem.
Let us mention a few modifications that are needed for the proof to work: note that before \eqref{llacer}, it is not true that 
$\int_\U e^{U(x)} (X-X_\gep)\dd x$ is of order $\gep$, but we still get a power of $\gep$ which is sufficient for our purpose. In the rest of the proof we just have to use that $e^{U(x)}$ is integrable.
\end{proof}

\noindent {\it Proof of Lemma \ref{lem:neymar} for $U(x)$ solution of \eqref{eq:source}.} 
What we need to do is to find a base of $\mathbb{L}^2(e^{U(x)}\,\dd x^2)$ which when suitably normalized is also a base of $H^1_0(\U)$.
Then the proof of Lemma \ref{lem:neymar} of the previous section applies.

\medskip

  First note that $e^{U}$ is in $\mathbb{L}^p(\U)$ for some $p>1$. Hence, one can consider the following Hilbert-Schmidt operator on the space $\mathbb{L}^2(e^{U(x)}\,\dd x )$
\begin{equation*} 
f \mapsto \int_\U G_\U(\cdot ,y) f(y) e^{U(y)}\,\dd y 
\end{equation*}
This symmetric operator can be diagonalized along an orthonormal (in $\mathbb{L}^2(e^{U(x)}\,\dd x)$) sequence $(e_j)_{j \geq 1}$ with associated eigenvalues $(\frac{1}{\lambda_j})_{j \geq 1}$ (decreasing order with repetition to account for multiple eigenvalues). We stress that $\sum_j \lambda_j^{-2}<+\infty$. Therefore we have
\begin{equation}\label{dribbledebenarfa}
\frac{e_j(x)}{\lambda_j}= \int_\U G_\U(x,y) e_j(y) e^{U(y)}\,\dd y .
\end{equation} 
 By using Cauchy-Schwarz, we get
\begin{equation*}
| \frac{e_j(x)}{\lambda_j} |  \leq \Big(\int_\U G_\U(x,y)^2 e^{U(y)}\,\dd y\Big)^{1/2}\Big(\int_\U e_j(y)^2 e^{U(y)}\,\dd y\Big)^{1/2}. 
\end{equation*}
Therefore \eqref{dribbledebenarfa} implies that $e_j$ is a continuous bounded function. One can then differentiate the expression \eqref{dribbledebenarfa} and see that the sequence $(e_j)_{j \geq 1}$ is in $H^1_0(\U)$; it is then standard to check that $(\frac{e_j}{\sqrt{\lambda_j}})_{j \geq 1}$ is an orthonormal sequence in $H^1_0(\U)$. In fact, the sequence $(\lambda_j)_j$ is the increasing sequence of eigenvalues of $-(2\pi)^{-1} \Delta_g$ with Dirichlet boundary conditions where $g$ is the metric tensor $e^{U(x)}\,\dd x^2$. It remains to show that the sequence $(\frac{e_j}{\sqrt{\lambda_j}})_{j \geq 1}$ is a \textbf{basis} of $H^1_0(\U)$. Consider a function $\varphi$ in $H^1_0(\U)$ which is orthogonal to every $(e_j)_{j\ge1}$ in $H^1_0$. Then as
\begin{equation}
\int_\U \varphi(x)  e_j(x)e^{U(x)}\dd x= - 2\pi (\gl_j)^{-1} \int_{\U} \varphi(x) \Delta e_j(x) \dd y=  
2\pi (\gl_j)^{-1} \int_{\U} \langle\partial \varphi(x), \partial e_j(x) \rangle\dd x=0.
\end{equation}
it is also orthogonal to all the $(e_j)_{j\ge 0}$ as an element of   $\mathbb{L}^2(e^{U(x)}\,\dd x)$ and thus is equal to zero. \qed 
%
%
%
%

\vspace{2mm}

 \begin{proof}[Proof of Theorem \ref{thldp:source}]
 
As in the proof of Theorem \ref{th:ldp}, the first task is to compute the Laplace transform of linear forms under the Liouville measure.

Let  
$V$ be the (deterministic) weak solution of \eqref{bouchesource} (see Corollary \ref{th:liouvillesource2}) and 
set 
\begin{equation}\label{deftheta}
T:= V-H.
\end{equation}
We also consider  
\begin{equation}\label{antonpolter}
\theta(x):= f(x)-4\pi \Lambda e^{V(x)}
\end{equation}
Note our definition together with \eqref{bouchesource} imply that  
\begin{equation}\label{guivarch}
T(x):=  \int_\U \theta(y) G_\U(x,y)\dd y
\end{equation}
We define $Y_\gamma$ to be a shifted version of the field $X$,  
\begin{equation}
Y_\gamma(x)=X-\frac{T}{\gamma}.
\end{equation}
We have  
\begin{align}
 & \E\big[\exp\left( \gamma^{-1}X(f)   - \frac{4\pi\Lambda}{\gamma^2} \int_\U e^{H(x)}e^{\gamma X(x)} \, \dd x \right) \big] \\
  &=\E\big[e^{\frac{1}{\gamma^2} \int_{\U}\gamma X(x)f(x)\dd x -\frac{4\pi\Lambda}{\gamma^2} \int_{\U} e^{V(x)}(1+\gamma Y_\gamma(x)) \, \dd x }e^{- \frac{4\pi\Lambda}{\gamma^2} \int_\U  e^{H(x)} e^{\gamma X(x)}-e^{V(x)}(1+\gamma Y_\gamma(x)) \, \dd x }\big]\nonumber  \\
 &= e^{   \frac{1}{\gamma^2} \int_\U (4\pi\Lambda T(x)e^{V(x)}-4\pi\Lambda e^{V(x)}) 
 \dd x+ \frac{1}{2 \gamma^2}   \iint_{\U^2}  \theta(x)\theta(y)  G_\U(x,y)\dd x \dd y }\nonumber\\
 &\times
 \E\big[ e^{\frac{1}{\gamma } \int_\U   X(x) \theta(x)\dd x- \frac{1}{2 \gamma^2}   \int_{\U^2}  \theta(x)\theta(y)  G_\U(x,y)\dd x \dd y} 
 e^{- \frac{4\pi\Lambda}{\gamma^2} \int_\U e^{H(x)} e^{\gamma X(x)}-e^{V(x)}(1+\gamma Y_\gamma(x)) \, \dd x }\big] .\label{valbuena}
\end{align}
 The usual Cameron-Martin tricks helps us to control the last term: the tilt shifts $X$ by an amount $T/\gamma$ (cf. \eqref{guivarch}) and has the effect of 
 centering $Y$. Similarly to \eqref{laudrup} and \eqref{toefting}, we have \textcolor{red}{(OK)}

\begin{equation}
 \iint_{\U^2}  \theta(x)\theta(y)G_\U(x,y)\dd x \dd y
= \frac{1}{2\pi}\int_{\U}  |\partial V(x)-\partial H(x)|^2 \dd x.
\end{equation}
and 
\begin{equation}
 \int_{\U} 4\pi \Lambda e^{V(x)} T(x) \dd x=-\frac{1}{2\pi}\int_{\U}  |\partial V(x)-\partial H(x)|^2 \dd x+  \int_{\U} f(x)(V(x)-H(x))\,\dd x,
\end{equation}
which yields  
\begin{equation}\begin{split}
 &  \E\big[\exp\left( \gamma^{-1} X(f)  - \frac{4\pi\Lambda}{\gamma^2} \int_\U e^{H(x)}e^{\gamma X(x)} \, \dd x \right) \big] \\
&\quad \quad =\exp\left(- \frac{ 1}{4\pi \gamma^2} \int_\U (  |\partial V(x)-\partial H(x)|^2+16\pi^2\Lambda e^{ V(x)}) \,\dd x+\frac{1}{\gamma^2}\int_\U f(x)(V(x)-H(x))\dd x\right)\\
&\quad \quad \quad \quad \times \E\Big[   \exp\left(- \frac{4\pi\Lambda}{\gamma^2} \int_\U e^{ V(x)} (e^{\gamma X(x)}-1- \gamma X(x) )\dd x \right)\Big].
\end{split}\end{equation}
and the last expectation converges towards a constant. This gives the exact asymptotic expression of the Laplace transform at exponential scale. 
\begin{align*}
\lim_{\gamma\to 0}\gamma^2\ln &\,\E_{\mu,\gamma,(z_i,\cX_i)_i}\Big[\exp\Big(\gamma^{-2}(\gamma X(f) -\int_\U U(x)f(x)\dd x)\Big)\Big]=\\
&- \frac{ 1}{4\pi  } \int_\U ( |\partial V-\partial H|^2+16\pi^2\Lambda e^{ V(x)}) \,\dd x+ \int_\U(V-U-H)(x)f(x)\dd x.
\end{align*}
We can then complete the proof by following the lines of Theorem \ref{th:ldp} (use Theorem \ref{th:liouvillesource} and Corollary \ref{th:liouvillesource2} to study the rate function).
 \end{proof}

  \appendix

\section{Solving the modified Liouville equation}\label{solving}
This section is devoted to solving the (eventually singular) Liouville equation as well as some variants. The techniques developed here are  known in the  differential geometry community and are close to \cite{battaglia}. Yet, we have not found references corresponding exactly to the results we need. Furthermore, the proofs are rather elementary and may help the reader (not necessarily familiar with these equations) to understand how it works.

\begin{theorem}\label{th:liouville1}
For  every  function $f$ belonging to $H^1_0(\U)$, the equation 
\begin{equation}\label{blanc}
\triangle U=8\pi^2 \Lambda e^{U }-2\pi f, \quad  U_{|\partial \U}=0
\end{equation} admits a  weak solution on $\U$  which is H\"older continuous $C^{1,\alpha}(\U)$ for all $\alpha<1$. 
\end{theorem}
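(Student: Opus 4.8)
The plan is to produce $U$ by the direct method of the calculus of variations. Introduce on $H^1_0(\U)$ the functional
\begin{equation*}
J(u)=\frac{1}{2}\int_\U |\partial u(x)|^2\,\dd x + 8\pi^2\Lambda\int_\U e^{u(x)}\,\dd x - 2\pi\int_\U f(x) u(x)\,\dd x,
\end{equation*}
whose formal Euler--Lagrange equation is exactly \eqref{blanc} (it is essentially the functional $E$ of Proposition \ref{ratefunction} perturbed by the linear form $-2\pi\int fu$). This is well defined: by the Moser--Trudinger inequality on the bounded domain $\U$, every $u\in H^1_0(\U)$ satisfies $e^{pu}\in\mathbb L^1(\U)$ for every $p<\infty$, and $H^1_0(\U)\hookrightarrow\mathbb L^2(\U)$ makes the linear term finite. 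Moreover $J$ is strictly convex, being the sum of the convex maps $u\mapsto\tfrac12\int|\partial u|^2$ and $u\mapsto\int e^u$ and a linear form, so the minimiser — once shown to exist — is automatically unique.

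\textbf{Existence of a minimiser.} Since $e^{u}\geq 0$ one may discard that term for the lower bound, and by definition of the $H^{-1}$ norm $\left|2\pi\int_\U fu\,\dd x\right|\leq 2\pi|f|_{H^{-1}}|u|_{H^1}$, so $J(u)\geq \tfrac12|u|_{H^1}^2-2\pi|f|_{H^{-1}}|u|_{H^1}$; hence $J$ is bounded below and coercive on $H^1_0(\U)$. Take a minimising sequence $(u_n)_n$; it is bounded in $H^1_0(\U)$, so after extraction $u_n\rightharpoonup U$ weakly in $H^1_0(\U)$ and, by Rellich, $u_n\to U$ in $\mathbb L^2(\U)$ and almost everywhere. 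The Dirichlet energy is convex and strongly continuous, hence weakly lower semicontinuous, and the linear term is weakly continuous. For the exponential term, the Moser--Trudinger inequality applied to the bounded sequence $(u_n)$ gives $\sup_n\int_\U e^{pu_n}\,\dd x<\infty$ for every $p<\infty$, so $(e^{u_n})_n$ is uniformly integrable; together with the almost everywhere convergence $e^{u_n}\to e^{U}$ this yields $\int_\U e^{u_n}\,\dd x\to\int_\U e^{U}\,\dd x$ by Vitali's theorem. Therefore $J(U)\leq\liminf_n J(u_n)=\inf_{H^1_0(\U)}J$, so $U$ is a minimiser.

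\textbf{$U$ is a weak solution.} For $v\in H^1_0(\U)$, using the Moser--Trudinger bound for $U+tv$ to dominate the difference quotients of $t\mapsto e^{U+tv}$, the map $t\mapsto J(U+tv)$ is differentiable at $0$ with derivative
\begin{equation*}
\int_\U\langle\partial U,\partial v\rangle\,\dd x+8\pi^2\Lambda\int_\U e^{U}v\,\dd x-2\pi\int_\U fv\,\dd x.
\end{equation*}
Since $U$ minimises $J$ this vanishes for every $v$, which is precisely the weak formulation of \eqref{blanc}. For the regularity, one bootstraps: $f\in H^1_0(\U)$ and $\U\subset\R^2$ give $f\in\mathbb L^q(\U)$ for every $q<\infty$ by Sobolev embedding, while $U\in H^1_0(\U)$ gives $e^{U}\in\mathbb L^q(\U)$ for every $q<\infty$ by Moser--Trudinger; hence $\triangle U=8\pi^2\Lambda e^{U}-2\pi f\in\mathbb L^q(\U)$ for every $q<\infty$. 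Calder\'on--Zygmund estimates up to the (smooth) boundary, together with $U_{|\partial\U}=0$, give $U\in W^{2,q}(\U)$ for every $q<\infty$, and $W^{2,q}(\U)\hookrightarrow C^{1,1-2/q}(\U)$ for $q>2$ yields $U\in C^{1,\alpha}(\U)$ for every $\alpha<1$.

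\textbf{Main obstacle.} Because $\Lambda>0$ the sign of the nonlinearity is favourable, so $J$ is coercive and no sharp-constant issue in the Moser--Trudinger inequality arises (unlike for prescribed-curvature problems on closed surfaces); the one genuinely delicate point is the weak sequential continuity of $u\mapsto\int_\U e^{u}$ along the minimising sequence, which is why the uniform-integrability argument via Moser--Trudinger is the crux of the existence step, the rest being standard elliptic bootstrap.
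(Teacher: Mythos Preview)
Your proof is correct and follows essentially the same approach as the paper: both obtain the solution by the direct method, minimizing a strictly convex coercive functional on $H^1_0(\U)$ and relying on the Moser--Trudinger/Sobolev--Orlicz embedding to control the exponential nonlinearity, then bootstrapping for regularity. The only cosmetic differences are that the paper first absorbs the linear term via the substitution $V=U-g$ with $\triangle g=-2\pi f$ before minimizing (citing Struwe for the abstract lower-semicontinuity and existence statements), and concludes regularity through an explicit Green-function estimate rather than Calder\'on--Zygmund; your argument is in fact slightly more complete on the $C^{1,\alpha}$ conclusion.
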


\noindent {\it Proof.} Let us consider the   solution $g\in H^1_0(\U)$ of the equation $\triangle g=-2\pi f$ with boundary condition $g_{|\partial \U}=0$. Let us set $h(x)=8\pi^2 \Lambda e^{g(x)}$. It is then readily seen that $U$ is a weak solution to \eqref{blanc} if and only if $V=U-g$ is a weak solution to
\begin{equation}\label{djorkaeff}
\triangle V=h(x)e^{V(x)}, \quad  V_{|\partial \U}=0.
\end{equation}
 With the help of Sobolev-Orlicz space embeddings \cite{trudinger}, $H^1_0(\U)$ is continuously embedded into the Orlicz space with Young function $\Phi(t)=\exp(t^2)-1$. It results that $e^{g}\in \mathbb{L}^p(\U)$ for all $p>1$.

Let us consider the positive functional $E$ defined on $H^1_0(\U)$ 
$$E(V)=\int_\U\big(|\partial V(x)|^2+2 h(x)e^{V(x)}\big)\,dx: =\int_\U F(x,V(x),\partial V(x))\,dx. $$ 
Since $h\in \mathbb{L}^q(\U)$ for some $q>1$, the functional $E$ is indeed defined on $H^1_0(\U)$. Since $p \mapsto F(x,V,p)$ is convex and $F$ is greater or equal to $0$ the functional $E$ is weakly lower semi-continuous (see \cite[Theorem 1.6]{struwe}). Since $E(V)$ goes to infinity as $\int_\U |\partial V|^2 dx$ goes to infinity, $E(V)$ achieves its infimum in $H^1_0(\U)$ as a consequence of  \cite[Theorem 1.2]{struwe}. One can check that $\argmin E$ is reduced to one point ($E$ is strictly convex) which is a weak solution to \eqref{blanc}: we call it $V$. Once again,  with the help of Sobolev-Orlicz space embeddings, we know that $e^{V}\in \mathbb{L}^p(\U)$. By H\"older's inequality, the product $he^V=\Delta V$ belongs to $\mathbb{L}^p(\U)$ for all $p>1$.   Standard arguments of Sobolev embeddings allows us to conclude that $V$ is H\"older continuous on $\U$. \qed

\begin{proposition}\label{th:liouville2}
For  every  function $f,h$ belonging to $H^1_0(\U)$, we denote by $U_t$ the solution of the equation
\begin{equation}\label{cruyff}
\triangle U_t=8\pi^2 \Lambda e^{U_t }-2\pi (f+th), \quad  U_{t|\partial \U}=0.
\end{equation} 
Then the family $\big(\frac{U_t-U_0}{t}\big)_{t>0}$ strongly converges in $H^1_0(\U)$ towards the solution $V$ of the equation 
\begin{equation}\label{vanbasten}
\triangle V=8\pi^2 V\Lambda e^{U_0 }-2\pi h, \quad  V_{|\partial \U}=0.
\end{equation} 
\end{proposition}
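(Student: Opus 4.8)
The plan is to differentiate the Liouville equation \eqref{cruyff} in the parameter $t$, which produces a \emph{linear} elliptic equation for the difference quotients, and then to pass to the limit, upgrading weak convergence to strong convergence by means of convergence of norms. Write $D_t := U_t-U_0$ and $W_t := t^{-1}D_t$. Subtracting the equations \eqref{cruyff} for $U_t$ and $U_0$ gives
\begin{equation*}
\triangle D_t = 8\pi^2\Lambda\,(e^{U_t}-e^{U_0}) - 2\pi t\,h, \qquad D_{t|\partial\U}=0 .
\end{equation*}
First I would record an a priori bound. Testing this identity against $D_t$, integrating by parts, and using the elementary inequality $(e^a-e^b)(a-b)\ge 0$, one obtains
\begin{equation*}
\int_\U |\partial D_t|^2\,\dd x = -8\pi^2\Lambda\int_\U (e^{U_t}-e^{U_0})(U_t-U_0)\,\dd x + 2\pi t\int_\U h\,D_t\,\dd x \le 2\pi t\,\|h\|_{\mathbb{L}^2}\|D_t\|_{\mathbb{L}^2},
\end{equation*}
so by the Poincar\'e inequality $|D_t|_{H^1}=O(t)$; in particular $(W_t)_{t>0}$ is bounded in $H^1_0(\U)$. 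Moreover, exactly as in the proof of Theorem \ref{th:liouville1}, the uniform $H^1_0$ bound on $U_t$ together with the Sobolev--Orlicz embedding gives a uniform bound on $e^{U_t}$ in $\mathbb{L}^p(\U)$ for every $p$, hence a uniform $C^{1,\alpha}(\U)$ bound on $U_t$; combined with $|D_t|_{H^1}\to 0$, this yields $U_t\to U_0$ uniformly on $\overline{\U}$ as $t\to 0$.

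Next I would pass to the limit. Set $\rho_t(x):=\int_0^1 e^{sU_t(x)+(1-s)U_0(x)}\,ds$, so that $t^{-1}(e^{U_t}-e^{U_0})=\rho_t W_t$ and $\rho_t\to e^{U_0}$ uniformly on $\overline{\U}$ (using the uniform $L^\infty$ bound and uniform convergence of $U_t$). Then $W_t$ is the weak solution of
\begin{equation*}
\triangle W_t = 8\pi^2\Lambda\,\rho_t\,W_t - 2\pi h, \qquad W_{t|\partial\U}=0 .
\end{equation*}
Given any sequence $t_n\to 0$, the a priori bound lets us extract a subsequence along which $W_{t_n}\rightharpoonup W$ weakly in $H^1_0(\U)$, hence $W_{t_n}\to W$ strongly in $\mathbb{L}^2(\U)$ by Rellich's theorem. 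For a test function $\varphi\in C^\infty_c(\U)$ one has $\rho_{t_n}\varphi\to e^{U_0}\varphi$ uniformly, so one may pass to the limit in
\begin{equation*}
-\int_\U \langle\partial W_{t_n},\partial\varphi\rangle\,\dd x = 8\pi^2\Lambda\int_\U \rho_{t_n}W_{t_n}\varphi\,\dd x - 2\pi\int_\U h\varphi\,\dd x
\end{equation*}
and conclude that $W$ is a weak solution of \eqref{vanbasten}. Since \eqref{vanbasten} is linear with nonnegative potential $8\pi^2\Lambda e^{U_0}$, its weak solution is unique (subtract two solutions, test against their difference, and use $\int_\U e^{U_0}(\cdot)^2\ge 0$), so $W=V$. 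As the limit is independent of the subsequence, the whole family converges: $W_t\rightharpoonup V$ weakly in $H^1_0(\U)$.

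Finally I would upgrade this to strong convergence. Testing the equation for $W_t$ against $W_t$ itself gives
\begin{equation*}
\int_\U |\partial W_t|^2\,\dd x = -8\pi^2\Lambda\int_\U \rho_t\,W_t^2\,\dd x + 2\pi\int_\U h\,W_t\,\dd x ,
\end{equation*}
and letting $t\to 0$, using $\rho_t\to e^{U_0}$ uniformly and $W_t\to V$ in $\mathbb{L}^2(\U)$, the right-hand side tends to $-8\pi^2\Lambda\int_\U e^{U_0}V^2\,\dd x+2\pi\int_\U hV\,\dd x$, which equals $\int_\U|\partial V|^2\,\dd x$ by testing \eqref{vanbasten} against $V$. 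Hence $|W_t|_{H^1}\to|V|_{H^1}$, and together with weak convergence in the Hilbert space $H^1_0(\U)$ this forces $W_t\to V$ strongly, as claimed. I expect the genuine difficulty to lie in the step that licenses passing the nonlinear term $\rho_{t_n}W_{t_n}$ to the limit: this requires both the $t$-uniform $H^1_0$ bound on the difference quotients and the uniform convergence $U_t\to U_0$ (equivalently the $C^{1,\alpha}$ regularity already exploited for \eqref{blanc}); once these ingredients are secured, identifying the limit and the weak-to-strong upgrade are routine.
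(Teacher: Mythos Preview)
Your argument is correct and follows essentially the same route as the paper: derive an a priori $H^1_0$ bound on the difference quotients by testing the subtracted equation against $D_t$ and using the monotonicity $(e^a-e^b)(a-b)\ge 0$; extract a weak limit, identify it as the unique solution of the linearized equation \eqref{vanbasten}; then upgrade to strong convergence by showing convergence of norms via Rellich. The only technical variation is in how the nonlinearity is linearized: the paper writes $e^{U_0}t^{-1}(e^{U_t-U_0}-1)=e^{U_0}V_t+O(t|V_t|^2)$ using the second-order Taylor bound $|e^x-1-x|\le Dx^2$ on $\{|x|\le 2M\}$ (relying only on the uniform $L^\infty$ bound $M=\sup_t\|U_t\|_\infty$), whereas you use the integral mean-value form $\rho_t W_t$ and then invoke the uniform $C^{1,\alpha}$ bound plus $|D_t|_{H^1}\to 0$ to get $\rho_t\to e^{U_0}$ uniformly. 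Your route is slightly cleaner to read but requires the extra (though easy) step of uniform convergence $U_t\to U_0$; the paper's Taylor remainder trick gets by with the $L^\infty$ bound alone. Both are equally valid and lead to the same norm-convergence computation at the end.
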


\noindent {\it Proof.} First notice that \eqref{vanbasten} is linear in $V$ so that there are no troubles in establishing existence and uniqueness of a weak solution to this equation  (see e.g.\ \cite[Theorem 1.2]{struwe}). Furthermore, the Sobolev-Orlicz embedding entails that $\sup_{t\in]0,1]}\int_\U e^{2 U_t}\,dx<+\infty$ for all $p>1$ and hence (from \eqref{cruyff}) that $\Delta U$ is in $\mathbb{L}^2(\U)$. 
The standard Sobolev embedding then entails that  $M=\sup_{t\in ]0,1]}\sup_{x\in \U} |U_t(x)|<+\infty$. In what follows, we will consider a constant $D$ such that 
\begin{equation}\label{rijkaard}
|e^x-1-x|\leq Dx^2,\quad \text{ for all }|x|\leq 2 M.
\end{equation}

Set $V_t= \frac{U_t-U_0}{t}$. Furthermore, by considering the difference of \eqref{cruyff} evaluated at $t$ and $t=0$ and then integrating against a test function $\phi$ in $H^1_0(\U)$, we obtain
\begin{align}\label{gullit1}
\int_\U\langle \partial V_t,\partial\phi\rangle \,dx+8\pi^2 \Lambda \int_\U e^{U_0}t^{-1}(e^{U_t-U_0}-1)\phi\,dx=2\pi  \int_\U h \phi\,dx.
\end{align}
Taking $\phi=V_t$ and using the inequality  $x(e^{x}-1)\geq  0$, we deduce
 \begin{align}\label{gullit2}
\int_\U|\partial V_t|^2\,dx\leq 2\pi  \int_\U h V_t\,dx\leq C\Big(\int_\U|\partial V_t|^2\,dx\Big)^{1/2}\Big(\int_\U |h|^2\,dx\Big)^{1/2}.
\end{align}
We used the Poincar\'e inequality to get the last inequality. Hence the sequence $(V_t)_t$ is bounded in $H^1_0(\U)$, and has limit points when $t\to 0$ for the weak topology in $H^1_0(\U)$.
Let $\overline{V}$ be one of these limit points. By taking the limit along a subsequence converging to $\bar V$ in \eqref{gullit1} (and using \eqref{rijkaard} to get rid of the exponential term), we deduce that $\overline{V}$ is a weak solution to \eqref{vanbasten}. By uniqueness, $\overline{V}=V$ and is the weak limit of $(V_t)_t$. It remains to prove the convergence of the norms to get the strong convergence. By taking once again $\phi=V_t$ in \eqref{gullit1}, we get 
 \begin{align*} 
\lim_{t\to 0}\Big(\int_\U|\partial V_t|^2\,dx+8\pi^2 \Lambda \int_\U e^{U_0}t^{-1}(e^{U_t-U_0}-1)V_t\,dx\Big)=2\pi  \int_\U h V\,dx.
\end{align*}
The main difficult term is the integral containing the exponential term. With the help of \eqref{rijkaard}, we have
 \begin{align*} 
\int_\U e^{U_0}t^{-1}(e^{U_t-U_0}-1)V_t\,dx=\int_\U e^{U_0}|V_t|^2\,dx+H_t,\quad |H_t|\leq Dt\int_\U|V_t|^3\,dx.
\end{align*}
By the Rellich-Kondrachov theorem, the embedding $H^1_0(\U)\to \mathbb{L}^2(\U)$ is compact so that the first term in the right-hand side converges towards $\int_\U e^{U_0}|V|^2\,dx$. Furthermore as $V_t$ is bounded in $H^1_0(\U)$ , the Sobolev embedding entails that $\sup_{t\in]0,1]}\int_\U e^{U_0}|V_t|^3\,dx<+\infty$. Hence the second term goes to $0$. We deduce
 \begin{align*} 
\lim_{t\to 0} \int_\U|\partial V_t|^2\,dx=-8\pi^2 \Lambda \int_\U e^{U_0}V^2\,dx+2\pi  \int_\U h V\,dx=\int_\U|\partial V|^2\,dx.
\end{align*}
The proof is complete.\qed

\begin{proposition}\label{th:liouville3}
Assume that the family $(f_t)_{t>0}$ is weakly converging towards $f_0$ in $H^1_0(\U)$ as $t\to 0$. Denote by $U_t$ the solution of the equation
\begin{equation}\label{baresi}
\triangle U_t=8\pi^2 \Lambda e^{U_t }-2\pi f_t, \quad  U_{t|\partial \U}=0.
\end{equation} 
Then the family $ ( U_t )_{t>0}$ strongly converges in $H^1_0(\U)$ towards $U_0$.
\end{proposition}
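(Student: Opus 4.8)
The plan is to follow the three-step scheme used in the proof of Proposition~\ref{th:liouville2}: first extract uniform a priori bounds on the family $(U_t)_t$, then identify every weak limit point of the family by passing to the limit in the weak formulation and invoking uniqueness, and finally upgrade weak convergence in $H^1_0(\U)$ to strong convergence by showing that the $H^1_0$ norms converge.

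First I would record the uniform bounds. Since $(f_t)_t$ converges weakly in $H^1_0(\U)$ it is bounded there, so $\sup_{t\in]0,1]}|f_t|_{H^1}<+\infty$, whence also $\sup_t|f_t|_{\mathbb{L}^2}<+\infty$ and $\sup_t|f_t|_{H^{-1}}<+\infty$ via the continuous embeddings $H^1_0(\U)\hookrightarrow\mathbb{L}^2(\U)\hookrightarrow H^{-1}(\U)$. Writing $g_t\in H^1_0(\U)$ for the solution of $\triangle g_t=-2\pi f_t$ and $h_t=8\pi^2\Lambda e^{g_t}$, the estimate $|g_t|_{H^1}\le C|f_t|_{H^{-1}}$ and the Sobolev--Orlicz embedding \cite{trudinger} give $\sup_t\int_\U e^{pg_t}\,dx<+\infty$ for every $p>1$, hence $\sup_t|h_t|_{\mathbb{L}^q}<+\infty$ for all $q$. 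Running the variational construction of Theorem~\ref{th:liouville1} and using $E_t(U_t-g_t)\le E_t(0)=2\int_\U h_t\,dx$ bounds $|U_t|_{H^1}$ uniformly; a second application of Sobolev--Orlicz then makes $e^{U_t}$ uniformly bounded in $\mathbb{L}^2(\U)$, so $\triangle U_t=8\pi^2\Lambda e^{U_t}-2\pi f_t$ is uniformly bounded in $\mathbb{L}^2(\U)$ and the embedding $H^2(\U)\hookrightarrow\mathbb{L}^\infty(\U)$ in dimension two yields, exactly as in the proof of Proposition~\ref{th:liouville2}, the bound $M:=\sup_{t\in]0,1]}\sup_{x\in\U}|U_t(x)|<+\infty$.

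Next I would identify the weak limit. Given any sequence $t_k\to0$, the uniform $H^1_0$ bound lets me pass to a subsequence with $U_{t_k}\rightharpoonup\overline U$ weakly in $H^1_0(\U)$; Rellich--Kondrachov gives $U_{t_k}\to\overline U$ in $\mathbb{L}^2(\U)$, and along a further subsequence, by the uniform $\mathbb{L}^\infty$ bound and dominated convergence, $e^{U_{t_k}}\to e^{\overline U}$ in $\mathbb{L}^2(\U)$. Since the embedding $H^1_0(\U)\hookrightarrow\mathbb{L}^2(\U)$ is compact, the hypothesis also yields $f_{t_k}\to f_0$ strongly in $\mathbb{L}^2(\U)$. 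Passing to the limit in the weak formulation
$$\int_\U\langle\partial U_{t_k},\partial\phi\rangle\,dx+8\pi^2\Lambda\int_\U e^{U_{t_k}}\phi\,dx=2\pi\int_\U f_{t_k}\phi\,dx,\qquad\phi\in H^1_0(\U),$$
I get that $\overline U$ is a weak solution of \eqref{baresi} with $t=0$; this solution being unique (the energy in Theorem~\ref{th:liouville1} is strictly convex), $\overline U=U_0$. As every sequence $t_k\to0$ has a subsequence along which $U_{t_k}\rightharpoonup U_0$, the whole family satisfies $U_t\rightharpoonup U_0$ in $H^1_0(\U)$, $U_t\to U_0$ in $\mathbb{L}^2(\U)$, and $e^{U_t}\to e^{U_0}$ in $\mathbb{L}^1(\U)$.

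Finally I would upgrade to strong convergence by testing the weak formulation for $U_t$ against $\phi=U_t$, which gives
$$\int_\U|\partial U_t|^2\,dx=2\pi\int_\U f_tU_t\,dx-8\pi^2\Lambda\int_\U e^{U_t}U_t\,dx.$$
The first integral on the right tends to $\int_\U f_0U_0\,dx$ since $f_t\to f_0$ and $U_t\to U_0$ strongly in $\mathbb{L}^2(\U)$, and the second tends to $\int_\U e^{U_0}U_0\,dx$ by the uniform $\mathbb{L}^\infty$ bound together with dominated convergence; the resulting limit $2\pi\int_\U f_0U_0\,dx-8\pi^2\Lambda\int_\U e^{U_0}U_0\,dx$ equals $\int_\U|\partial U_0|^2\,dx$ by the weak formulation for $U_0$ tested against $U_0$. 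Hence $|U_t|_{H^1}\to|U_0|_{H^1}$, and since we already know $U_t\rightharpoonup U_0$, this forces $U_t\to U_0$ strongly in the Hilbert space $H^1_0(\U)$. I expect the main obstacle to be the bookkeeping in the first step: one must check that the Sobolev--Orlicz constant and the energy bound $E_t(U_t-g_t)\le E_t(0)$ depend only on $\sup_t|f_t|_{H^1}$, finite precisely because weakly convergent families are bounded; once the uniform $\mathbb{L}^\infty$ bound is in hand, the remaining two steps are a routine compactness-plus-uniqueness argument.
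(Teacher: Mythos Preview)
Your proof is correct and follows essentially the same strategy the paper sketches: exploit Rellich--Kondrachov to get $f_t\to f_0$ strongly in $\mathbb{L}^2(\U)$, establish uniform $H^1_0$ and $\mathbb{L}^\infty$ bounds on $(U_t)$ via Sobolev--Orlicz and elliptic regularity as in Proposition~\ref{th:liouville2}, and then run a compactness-plus-uniqueness argument followed by norm convergence. The only cosmetic difference is that the paper proposes working with the difference $W_t=U_t-U_0$ and showing it converges to the solution of the linearized equation \eqref{vanbasten} with $h=0$ (namely $W=0$), whereas you pass to the limit directly in the nonlinear weak formulation for $U_t$; your route is slightly more direct and avoids the linearization step, but both arguments rest on the same a priori bounds and the same uniqueness statement.
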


\noindent {\it Proof.} The key points are first to observe that $(f_t)_{t>0}$ is strongly converging towards $f_0$ in $\mathbb{L}^2(\U)$ by using the Rellich-Kondrachov Theorem and  that $\sup_{t>0}\int_\U |f_t|^p\,dx<+\infty$ by the Sobolev embeddings. Then the arguments are quite similar to the proof of Proposition \ref{th:liouville2}: we can prove that $U_t-U_0$ converges strongly  to the a solution of \eqref{vanbasten} with $h=0$. Details are thus left to the reader.\qed

\begin{theorem}\label{th:liouvillesource}
Consider $z_1,\dots,z_p\in \U$ and $\cX_1,\dots\cX_p\in ]0,2[$. The equation 
\begin{equation}\label{juninho}
\triangle U=8\pi^2 \Lambda e^{U }-2\pi \sum_{i=1}^p\cX_i\delta_{z_i}, \quad  U_{|\partial \U}=0
\end{equation} admits a  solution on $\U$ such that $U-  \sum_{i=1}^p\cX_iG_\U(\cdot, z_i)$ is locally H\"older continuous on $\U$.
\end{theorem}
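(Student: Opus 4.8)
The plan is to remove the logarithmic singularities of the prescribed sources and thereby reduce \eqref{juninho} to a non-singular Liouville-type equation, to which the variational argument of Theorem~\ref{th:liouville1} applies almost verbatim. Set $H(x)=\sum_{i=1}^{p}\chi_i G_\U(x,z_i)$, so that $\triangle H=-2\pi\sum_i\chi_i\delta_{z_i}$ and $H_{|\partial \U}=0$. Then $U$ is a weak solution of \eqref{juninho} if and only if $V:=U-H$ is a weak solution of
$$\triangle V=h(x)e^{V(x)},\qquad V_{|\partial \U}=0,\qquad h(x):=8\pi^2\Lambda e^{H(x)}.$$
The first thing to verify is that $h\in\mathbb{L}^q(\U)$ for some $q>1$. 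Since $G_\U(x,z_i)=\ln\tfrac{1}{|x-z_i|}+O(1)$ near $z_i$, is smooth away from the $z_i$, and vanishes on $\partial\U$, the function $e^{H}$ behaves like $|x-z_i|^{-\chi_i}$ near each (distinct) point $z_i$ and is bounded elsewhere; hence $h\in\mathbb{L}^q(\U)$ for every $q<\min_i(2/\chi_i)$, a quantity strictly greater than $1$. This is the one place where the hypothesis $\chi_i\in\,]0,2[$ is used.

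Given this integrability, I would run the argument of Theorem~\ref{th:liouville1}. Consider the functional $E(V)=\int_\U\bigl(|\partial V|^2+2h(x)e^{V(x)}\bigr)\,dx$ on $H^1_0(\U)$: it is well-defined because $h\in\mathbb{L}^q$ and, by the Moser--Trudinger / Sobolev--Orlicz embedding of $H^1_0(\U)$, $e^{V}\in\mathbb{L}^{q'}(\U)$ for every $q'<\infty$; it is weakly lower semi-continuous (convexity of the integrand in the gradient variable together with Fatou applied to the zeroth-order term, using $h\ge 0$); it is coercive since $E(V)\ge\int_\U|\partial V|^2\,dx$; and it is strictly convex. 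Consequently $E$ attains its infimum at a unique $V\in H^1_0(\U)$, which is a weak solution of the displayed equation, and $U:=V+H$ is the desired weak solution of \eqref{juninho}.

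For the regularity assertion, note that $\triangle V=he^{V}\in\mathbb{L}^p(\U)$ for some $p>1$: indeed $h\in\mathbb{L}^q$ with $q>1$ and $e^{V}\in\mathbb{L}^{q'}$ for all $q'<\infty$, so by H\"older's inequality the product lies in $\mathbb{L}^p$ for any $p<q$. Elliptic regularity on the smooth domain $\U$ then gives $V\in W^{2,p}(\U)$, and the two-dimensional Sobolev embedding yields $V\in C^{0,\beta}(\overline{\U})$ with $\beta=2-2/p>0$; alternatively one may estimate $|V(x)-V(y)|$ directly from the representation $V(x)=-\tfrac{1}{2\pi}\int_\U h(y)e^{V(y)}G_\U(x,y)\,dy$ and a H\"older bound on $\int_\U|G_\U(x,z)-G_\U(y,z)|^{q'}\,dz$, exactly as in the proof of Theorem~\ref{th:liouville1}. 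Since $U-\sum_i\chi_i G_\U(\cdot,z_i)=V$, this function is (locally, indeed globally on $\overline{\U}$) H\"older continuous, which is the claim.

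The only genuinely delicate point is the integrability step $h=8\pi^2\Lambda e^{H}\in\mathbb{L}^q$ with $q>1$: one must observe that, because the $z_i$ are distinct, near each $z_i$ only the single term $\chi_i G_\U(\cdot,z_i)$ produces a blow-up, so the exponent governing the singularity is $\chi_i$ rather than $\sum_i\chi_i$, and the condition $\chi_i<2$ is then exactly what guarantees $\mathbb{L}^q$-integrability for some $q>1$. Once this is in hand, the rest is a routine transcription of Theorem~\ref{th:liouville1}.
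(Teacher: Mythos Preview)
Your proof is correct and follows essentially the same route as the paper: subtract $H=\sum_i\chi_iG_\U(\cdot,z_i)$ to reduce to $\triangle V=h e^{V}$ with $h=8\pi^2\Lambda e^{H}\in\mathbb{L}^q(\U)$ for some $q>1$, minimize the convex coercive functional $E(V)=\int_\U(|\partial V|^2+2he^{V})\,dx$ on $H^1_0(\U)$ via the Moser--Trudinger embedding, and conclude H\"older regularity of $V$ from $he^{V}\in\mathbb{L}^p$ for some $p>1$. Your explicit remark that distinctness of the $z_i$ is what ensures the local singularity exponent is $\chi_i$ (rather than $\sum_i\chi_i$) is a useful clarification that the paper leaves implicit.
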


\noindent {\it Proof.} By using the same trick as in the proof of Theorem \ref{th:liouville1}, by setting $V=U-  \sum_{i=1}^p\cX_iG_\U(\cdot, z_i)$, it suffices to solve the equation
\begin{equation}\label{francescoli}
\triangle V(x)=h(x)e^{V(x)}, \quad  V_{|\partial \U}=0
\end{equation}
with $h(x)=8\pi^2 \Lambda e^{\sum_{i=1}^p\cX_iG_\U(\cdot, z_i)}$. Let us consider the functional
$$E(V)=\int_\U|\partial V(x)|^2+2h(x)e^{V(x)}\,dx$$ defined on $H^1_0(\U)$ (observe that $h\in L^q(\U)$ for some $q<q_\cX=2/\max_{i\in \{1,\dots,p\}} \cX_i$ where $q_\cX >1$). Therefore, one can use the same arguments than the ones in the proof of Theorem \ref{th:liouville1} to deduce from \cite[Theorem 1.2]{struwe} and \cite[Theorem 1.6]{struwe} that the functional $E$ is weakly lower semi-continuous and achieves its infimum in $H^1_0(\U)$. Moreover the reader can check that  $\argmin E$ is reduced to a point (by convexity) which is a solution of  \eqref{francescoli}. Let us call it $V$. With the help of Sobolev-Orlicz space embeddings \cite{trudinger}, $H^1_0(\U)$ is continuously embedded into the Orlicz space with Young function $\Phi(t)=\exp(t^2)-1$. It results that $e^{V}\in \mathbb{L}^p(\U)$ for all $p>1$. By H\"older's inequality, the product $he^V$ belongs to $\mathbb{L}^q(\U)$ for all $q<q_\cX$. Standard arguments of Sobolev embeddings allows us to conclude that $V$ is $\alpha$-H\"older continuous on $\U$ for all 
$\alpha<\max(1, 2(1-q_\cX^{-1}))$.\qed

\begin{proposition}\label{th:liouvillesource2}
For each function $f\in H^1_0(U)$ on $\U$, the equation 
\begin{equation}\label{thuram}
\triangle U=4\pi \Lambda e^{U }-2\pi \sum_{i=1}^p\cX_i\delta_{z_i}+2\pi f, \quad  U_{|\partial \U}=0
\end{equation} admits a solution on $\U$ such that $U-  \sum_{i=1}^p\cX_iG_\U(\cdot, z_i)$ is locally H\"older continuous on $\U$. 
\end{proposition}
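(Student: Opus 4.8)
The plan is to combine the two reductions already used in Theorems~\ref{th:liouville1} and~\ref{th:liouvillesource}, subtracting at once the logarithmic singularities created by the Dirac masses and an auxiliary function absorbing the smooth source $f$. Set $W(x)=\sum_{i=1}^p\chi_i G_\U(x,z_i)$, so that $\triangle W=-2\pi\sum_i\chi_i\delta_{z_i}$, and let $g\in H^1_0(\U)$ solve $\triangle g=2\pi f$ with $g_{|\partial\U}=0$. As $f\in H^1_0(\U)$ lies in $\mathbb{L}^p(\U)$ for every $p>1$ by the Sobolev--Orlicz embedding \cite{trudinger}, elliptic regularity gives $g\in W^{2,p}(\U)$ for all $p$, hence $g$ is bounded and H\"older continuous on $\overline{\U}$. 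Then $U$ solves \eqref{thuram} if and only if $V:=U-W-g$ is a weak solution of $\triangle V=h(x)e^{V(x)}$ on $\U$ with $V_{|\partial\U}=0$, where $h(x):=4\pi\Lambda\,e^{W(x)+g(x)}$. Since $g$ is bounded and $e^{W}$ has only the integrable singularities $|x-z_i|^{-\chi_i}$ at the interior points $z_i$, we get $h\in\mathbb{L}^q(\U)$ for every $q<q_\chi:=2/\max_{1\le i\le p}\chi_i$, with $q_\chi>1$. This is exactly the setting of the proofs of Theorems~\ref{th:liouville1} and~\ref{th:liouvillesource}.

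From here I would run that variational argument verbatim. Consider the functional $E(V)=\int_\U\big(|\partial V(x)|^2+2h(x)e^{V(x)}\big)\,dx$ on $H^1_0(\U)$. Using again the Sobolev--Orlicz embedding of $H^1_0(\U)$ into the Orlicz space with Young function $\Phi(t)=\exp(t^2)-1$, one has $e^{V}\in\mathbb{L}^p(\U)$ for all $p>1$, so $he^V\in\mathbb{L}^1(\U)$ by H\"older and $E$ is well defined and nonnegative on $H^1_0(\U)$. The integrand is convex and $\ge 0$ in the gradient variable, so $E$ is weakly lower semi-continuous by \cite[Theorem~1.6]{struwe}; it is coercive because $E(V)\to\infty$ as $\int_\U|\partial V|^2\,dx\to\infty$, hence it attains its infimum by \cite[Theorem~1.2]{struwe}. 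Strict convexity shows the minimiser is unique, and the Euler--Lagrange equation identifies it as a weak solution of $\triangle V=he^V$ with zero boundary data.

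For the regularity claim: since $e^{V}\in\mathbb{L}^p(\U)$ for all $p$, H\"older's inequality gives $\triangle V=he^V\in\mathbb{L}^q(\U)$ for every $q<q_\chi$, with $q_\chi>1$. Writing $V$ through the Green representation and repeating the chain of H\"older estimates from the end of the proof of Theorem~\ref{th:liouville1} yields $|V(x)-V(y)|\le C|x-y|^{\beta}\,|\log|x-y||$ for some $\beta>0$, so $V$ is locally H\"older continuous on $\U$, with the same exponent as in Theorem~\ref{th:liouvillesource}. Since $U-\sum_i\chi_i G_\U(\cdot,z_i)=V+g$ and $g$ is H\"older continuous, the stated conclusion follows. \textbf{Main obstacle.} There is essentially no new difficulty here: the only point to verify is that performing the two subtractions simultaneously does not destroy the integrability hypothesis $h\in\mathbb{L}^q(\U)$ with $q>1$ needed to run the variational scheme, and this is immediate because $g$ is bounded, so $h$ stays in the same $\mathbb{L}^q$ classes as in the purely singular equation of Theorem~\ref{th:liouvillesource}.
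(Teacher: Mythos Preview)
Your proof is correct and follows precisely the route the paper indicates: the paper's own proof consists of the single sentence ``It suffices to adapt the arguments of Theorem~\ref{th:liouvillesource},'' and your write-up is exactly that adaptation, combining the substitutions from Theorems~\ref{th:liouville1} and~\ref{th:liouvillesource} to reduce to the same model equation $\triangle V=he^V$ with $h\in\mathbb{L}^q(\U)$ for some $q>1$. The only cosmetic difference is that you invoke elliptic regularity to get $g$ bounded, whereas the paper (in Theorem~\ref{th:liouville1}) is content with $e^{g}\in\mathbb{L}^p(\U)$ for all $p$ via the Sobolev--Orlicz embedding; either suffices to place $h$ in the required $\mathbb{L}^q$ class, and your stronger statement makes the final H\"older regularity of $U-\sum_i\chi_iG_\U(\cdot,z_i)=V+g$ immediate.
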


\noindent {\it Proof.} It suffices to adapt the arguments of Theorem \ref{th:liouvillesource}.\qed
 
{\small
}

\end{document}